\newtheorem{theorem}{Theorem}
\newtheorem{prop}[theorem]{Proposition}
\newtheorem{lem}[theorem]{Lemma}
\newtheorem{cor}[theorem]{Corollary}
\theoremstyle{definition}
\newtheorem{rem}[theorem]{Remark}
\newtheorem{mydef}[theorem]{Definition}
\newtheorem{example}[theorem]{Example}
\renewcommand{\epsilon}{\varepsilon}
\def\R{\mathbb{R}}
\def\x{\mathbf{x}}
\def\<{\langle}
\def\>{\rangle}
\def\LL{{\mathcal L}}
\newcommand{\BigWedge}{\mathord{\adjustbox{valign=B,totalheight=.7\baselineskip}{$\bigwedge$}}}
\DeclareMathOperator{\spn}{span}
\begin{document}
\title[On the cohomology of Lie algebras associated with graphs]{On the cohomology of Lie algebras associated with graphs}
\author[M.\ Aldi, A.\ Butler, J.\ Gardiner, D.\ Grandini, M.\ Lichtenwalner, K.\ Pan]{Marco Aldi, Andrew Butler, Jordan Gardiner, Daniele Grandini, Monica Lichtenwalner, Kevin Pan}

\begin{abstract}
We describe a canonical decomposition of the cohomology of the Dani-Mainkar 2-step nilpotent Lie algebras associated with graphs. As applications, we obtain explicit formulas for the third cohomology of any Dani-Mainkar Lie algebra and for the cohomology in all degrees of Lie algebras associated with arbitrary star graphs. We also describe a procedure to reduce the calculation of the cohomology of solvable Lie algebras associated with graphs through the Grantcharov-Grantcharov-Iliev construction to the cohomology of Dani-Mainkar Lie algebras. 
\end{abstract}
\maketitle

\noindent {\it MSC:} 17B56, 17B60.

\section{Introduction}
Originally motivated by the study of Anosov diffeomorphisms on nilmanifolds, the Dani-Mainkar construction \cite{DaniMainkar05} associates with each simple graph $G$ a 2-step nilpotent Lie algebra $\mathcal L(G)$. While a theorem of Mainkar \cite{Mainkar15} shows there is no loss of information in going from a graph $G$ to its Dani-Mainkar Lie algebra $\mathcal L(G)$, this information is repackaged in a nontrivial way. With this dictionary at hand, it is rather natural to consider the Cartan-Chevalley-Eilenberg complex $\mathcal C^\bullet(G)$ of the Lie algebra $\mathcal L(G)$. Thanks to a theorem of Nomizu \cite{Nomizu54}, the cohomology $H^\bullet(\mathcal L(G))$ of $\mathcal C^\bullet(G)$ is isomorphic to the de Rham cohomology of nilmanifolds of the form $\exp(\mathcal L(G))/\Lambda$ for a suitable discrete co-compact group $\Lambda$. The first and second cohomology of $\mathcal L(G)$ have been calculated in \cite{PouseeleTirao09} for any simple graph $G$. 

Our starting point is the observation that the Cartan-Chevalley-Eilenberg complex of $\mathcal L(G)$ decomposes canonically into direct summands labeled by induced subgraphs of $G$. In degree one and two, this decomposition yields a different proof of the results of \cite{PouseeleTirao09}. In degree three, we obtain a novel and explicit formula for the third cohomology as a weighted count of induced subgraphs from a list of 23 possible isomorphism types. While in principle our analysis produces formulas in all degrees, the list of possible isomorphism types grows too rapidly for our method to be of practical use in calculating high degree cohomology of general Dani-Mainkar Lie algebras. Nevertheless, our decomposition can be used to effectively calculate the cohomology of $\mathcal L(G)$ in all degrees for special families of graphs that are closed under taking induced subgraphs. As an illustration, we calculate the full cohomology $H^\bullet(\mathcal L(G))$ when $G$ is a star graph of arbitrary size. While this result is known \cite{ArmstrongCairnsJessup97}, our methods are different and provide new information regarding the contribution of the induced subgraphs. Moreover we expect our techniques to be applicable to other classes of graphs with relatively simple induced subgraphs.

Following work by Grantcharov, Grantcharov, and Iliev \cite{GGI} we also consider solvable extensions $\mathcal L(G,\Sigma)$ of the abelian Lie algebra generated by a collection $\Sigma$ of cliques of $G$ by $\mathcal L(G)$. Our main result in this direction completely reduces the calculation of the cohomology of Grantcharov-Grantcharov-Iliev Lie algebras to the Dani-Mainkar case. More precisely, we show that given $G$ and $\Sigma$ as above there exists another (explicitly constructed) finite simple graph $\widetilde{G}$ such that the cohomology of $\mathcal L(G,\Sigma)$ and the cohomology of $\mathcal L(\widetilde{G}+|\Sigma|K_1)$ are isomorphic as vector spaces. In particular, combining this with our results regarding the cohomology of the Dani-Mainkar Lie algebras, we obtain explicit formulas for the dimension of low-degree cohomology groups of a large class of solvable Lie algebras and, thanks to a theorem of Hattori \cite{Hattori60}, of the corresponding solvmanifolds.

\subsection*{Acknowledgements} The work on this paper was supported by the National Science Foundation grant number DMS1950015 and by VCU Quest Award ``Quantum Fields and Knots: An Integrative Approach.'' M.A.\ is grateful to Sam Bevins for useful conversations.

\section{Background}

\subsection{The Dani-Mainkar Construction}

\begin{mydef}
Let $G$ be a finite simple graph with set of vertices $V(G)=\{1,\ldots,n\}$ and set of edges $E(G)$. Let $V$ be the real vector space with basis $x_1,\ldots,x_n$ and let $W$ be the subspace of $\BigWedge^2 V$ generated by all quadratic monomials of the form $x_i\wedge x_j$ whenever $\{i,j\}\notin E(G)$. Define the real vector space $\mathcal L(G)$ as the direct sum of $V$ with the quotient $(\BigWedge^2 V)/W$. Then $\mathcal L(G)$ has a canonical structure of 2-step nilpotent Lie algebra such that $[x,x']=x\wedge x' \mod W$ for all $x,x'\in V$, and $[\BigWedge^2 V,\mathcal L(G)]=0$. We refer to $\mathcal L(G)$ as the {\it Dani-Mainkar Lie algebra of $G$}.
\end{mydef}

\begin{rem}
In practice, it is convenient to think of $\mathcal L(G)$ as having basis $x_1,\ldots,x_n$ and $x_{i,j}$ whenever $\{i,j\}\in E(G)$ and $i<j$. Then the nontrivial brackets between generators are $[x_i,x_j]=x_{i,j}$ if $\{i,j\}\in E(G)$ and $i<j$, as well as $[x_i,x_j]=-x_{i,j}$ if $\{i,j\}\in E(G)$ and $i>j$. 
\end{rem}

\begin{example} $\mathcal L(K_2)$ is spanned by $\{x_1, x_2, x_{1,2}\}$ with bracket $[x_1, x_2] = x_{1,2}$ and all other brackets not defined by bilinearity or skew symmetry equaling zero. In other words, $\mathcal L(K_2)$ is the three-dimensional Heisenberg Lie algebra.
\end{example}

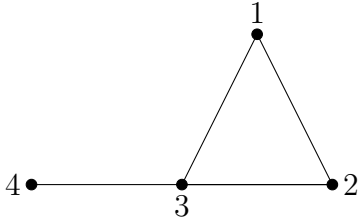
\begin{figure}\begin{tikzpicture}
    \filldraw
    (-3,-2) circle (2pt) node[left] {$4$}
    (-1,-2) circle (2pt) node[below] {$3$}
    (1,-2) circle (2pt) node[right] {$2$}
    (0,0) circle (2pt) node[above] {$1$};
    
    \draw (1,-2) -- (-1,-2) -- (0,0) -- (1,-2) -- (-3,-2);
\end{tikzpicture}
\caption{The graph $G$ as defined in Example \ref{ex:4}.}\label{fig:0}
\end{figure}

\begin{example}\label{ex:4}
If $G$ is the graph in Figure \ref{fig:0}, then $\mathcal{L}(G)$ is spanned by $x_1$, $x_2$, $x_3$, $x_4$, $x_{1,2}$, $x_{1,3}$, $x_{2,3}$, $x_{3,4}$ with brackets $[x_1, x_2] = x_{1,2}$, $[x_1, x_3] = x_{1,3}$, $[x_2, x_3] = x_{2,3}$, $[x_3, x_4] = x_{3,4}$. All other brackets not defined by bilinearity or skew symmetry are set to zero. 
\end{example}

\subsection{The Grantcharov-Grantcharov-Iliev Construction}

\begin{mydef}\label{def:GGI}
Let $G$ be a finite simple graph with vertices $V(G)=\{1,\ldots,n\}$ and let $\Sigma=\{\sigma_1,\ldots,\sigma_s\}$ be an arbitrary collection of (possibly repeated, not necessarily of the same size) cliques of $G$. Consider the real vector space $\mathcal L(G,\Sigma)$ with generators $x_1,\ldots,x_n,y_1,\ldots,y_s$, as well as $x_{i,j}$ whenever $1\le i<j\le n$ and $\{i,j\}\in E(G)$. We endow $\mathcal L(G,\Sigma)$ with a bilinear skew-symmetric bracket $[-\,,-]$ such that 
\begin{enumerate}[1)]
\item $[x_i,x_j]=x_{i,j}$ whenever $i<j$ and $\{i,j\}\in E(G)$,
\item $[x_i,y_k]=|\{i\}\cap V(\sigma_k)|x_i$ for all $i\in \{1,\ldots,n\}$, $k\in\{1,\ldots,s\}$
\item $[x_{i,j},y_k]=|\{i,j\}\cap V(\sigma_k)|x_{i,j}$ for all $1\le i<j\le n$ such that $\{i,j\}\in E(G)$ and $k\in \{1,\ldots,s\}$. 
\end{enumerate}
\end{mydef}

\begin{example}
The particular case of solvable Lie algebras of the form $\mathcal L(G,\Sigma)$ where $G$ is a finite simple graph and, for some $k>1$, $\Sigma$ is the collection of all $k$-cliques of $G$ is the one originally studied in \cite{GGI}.    
\end{example}

\begin{rem}
    If $\Sigma$ is the empty set, then $\mathcal L(G,\Sigma)$ coincides with the Dani-Mainkar Lie algebra $\mathcal L(G)$.
\end{rem}

\begin{example}\label{ex:9}
    Let $G$ be the graph of Figure \ref{fig:0} 
    and let $\Sigma=\{\sigma_1,\sigma_2,\sigma_3\}$ with $\sigma_1=\sigma_2=\{1,2\}$ and $\sigma_3=\{1,2,3\}$.
    Then 
    \begin{equation}
        \mathcal L(G,\Sigma)=\spn_\R\{x_1,x_2,x_3,x_4,x_{1,2},x_{1,3},x_{2,3},x_{3,4},y_1,y_2,y_3\}\,.
        \end{equation}
    In addition to the brackets of $\mathcal L(G)$ listed in Example \ref{ex:4}, we have $[x_1,y_1]=[x_1,y_2]=[x_1,y_3]=x_1$, $ [x_2,y_1]=[x_2,y_2]=[x_2,y_3]=x_2$, $[x_3,y_3]=x_3$, $[x_{1,2},y_1]=[x_{1,2},y_2]=[x_{1,2},y_3]=2x_{1,2}$, $2[x_{2,3},y_1]=2[x_{2,3},y_2]=[x_{2,3},y_3]=2x_{2,3}$, $2[x_{1,3},y_1]=2[x_{1,3},y_2]=[x_{1,3},y_3]=2x_{2,3}$, and $[x_{3,4},y_3]=x_{3,4}$.
\end{example}

\begin{rem}\label{rem:10}
Let $G$ and $G'$ be finite simple graphs and let $\Sigma$ and $\Sigma'$ be collections of cliques of $G$ and $G'$, respectively. Then $\mathcal L(G+G',\Sigma\cup \Sigma')=\mathcal L(G,\Sigma)\oplus \mathcal L(G',\Sigma')$. 
\end{rem}

\begin{prop}\label{prop:10}
Let $G$ be a finite simple graph and let $\Sigma$ be a collection of cliques of $G$. Then $\mathcal L(G,\Sigma)$ is a completely solvable Lie algebra.
\end{prop}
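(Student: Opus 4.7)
The plan is to exhibit an explicit complete flag of ideals
\[ 0 = I_0 \subset I_1 \subset \cdots \subset I_N = \mathcal L(G,\Sigma), \qquad \dim I_k/I_{k-1} = 1 \text{ for all } k, \]
whose existence is equivalent to complete solvability over $\R$.

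The starting observation is that $\mathcal L(G)$ is an ideal of $\mathcal L(G,\Sigma)$, because the formulas $[x_i,y_k]\in \R x_i$ and $[x_{i,j},y_k]\in \R x_{i,j}$ show that the abelian subalgebra $A = \spn_\R\{y_1,\ldots,y_s\}$ acts on $\mathcal L(G)$ by derivations. Moreover, $A$ acts \emph{diagonally} in the distinguished basis $\{x_i\}\cup\{x_{i,j}:\{i,j\}\in E(G)\}$ with eigenvalues in $\{0,1,2\}\subset \R$; in particular, every subspace of $\mathcal L(G)$ spanned by a subset of this basis is automatically $A$-stable.

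I would then construct the flag in three phases. In the first phase, enumerate the edges of $G$ as $\{i_1,j_1\},\ldots,\{i_m,j_m\}$ and set $I_k = \spn_\R\{x_{i_1,j_1},\ldots,x_{i_k,j_k}\}$ for $k = 1, \ldots, m$; each $I_k$ is central in $\mathcal L(G)$ (since $\mathcal L(G)$ is $2$-step nilpotent) and $A$-stable, hence an ideal of $\mathcal L(G,\Sigma)$. In the second phase, set $I_{m+r} = I_m + \spn_\R\{x_1,\ldots,x_r\}$ for $r=1,\ldots,n$; all commutators $[x_s,x_{s'}]$ already lie in $I_m$, and each $[y_k,x_s]\in \R x_s$ is absorbed, so every partial sum is again an ideal. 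In the third phase, set $I_{m+n+t} = I_{m+n} + \spn_\R\{y_1,\ldots,y_t\}$ for $t=1,\ldots,s$; since $[y_k,y_l]=0$ and $[y_k,\mathcal L(G)]\subseteq \mathcal L(G) = I_{m+n}$ is already absorbed, these too are ideals, and the final term equals $\mathcal L(G,\Sigma)$.

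The main obstacle is essentially cosmetic: verifying closure under the bracket for each proposed ideal in the flag. This reduces to the two structural facts recalled above, namely that $\mathcal L(G)$ is two-step nilpotent with center containing every $x_{i,j}$, and that $A$ acts diagonally with real eigenvalues in the chosen basis, so no deeper argument is needed.
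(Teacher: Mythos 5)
Your flag construction for complete solvability is correct and in fact more explicit than the paper's: the paper only peels off the generators $y_s,y_{s-1},\ldots$ one at a time, reducing to a chain $\mathcal L(G)\subset J_1\subset\cdots\subset\mathcal L(G,\Sigma)$ of codimension-one ideals, and then invokes the nilpotency of $\mathcal L(G)$ to finish; you instead exhibit the full flag through the center of $\mathcal L(G)$, then the vertex generators, then the $y_k$. That half of your argument stands on its own.

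There is, however, a genuine gap earlier. Definition \ref{def:GGI} only endows $\mathcal L(G,\Sigma)$ with a bilinear skew-symmetric bracket; the assertion that it is a \emph{Lie algebra}, i.e.\ that the Jacobi identity holds, is part of the content of Proposition \ref{prop:10}, and the first half of the paper's proof is devoted to checking it on generators. You assume it implicitly when you claim that $A=\spn_\R\{y_1,\ldots,y_s\}$ ``acts on $\mathcal L(G)$ by derivations'' merely because the brackets $[x_i,y_k]$ and $[x_{i,j},y_k]$ are diagonal. Diagonality alone does not give the derivation property: $\ad_{y_k}$ is a derivation of $\mathcal L(G)$ precisely when
\begin{equation*}
[[x_i,x_j],y_k]=[[x_i,y_k],x_j]+[x_i,[x_j,y_k]]\,,
\end{equation*}
i.e.\ when the eigenvalue on $x_{i,j}$ equals the sum of the eigenvalues on $x_i$ and $x_j$, and this holds only because of the specific identity $|\{i,j\}\cap V(\sigma_k)|=|\{i\}\cap V(\sigma_k)|+|\{j\}\cap V(\sigma_k)|$ (Case 1 of the paper's verification); with a different choice of diagonal eigenvalues the bracket would fail Jacobi. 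The Jacobi identity for triples $x,y_i,y_j$ (Case 2) also needs a word, though it follows easily from the fact that the $y$-actions are simultaneously diagonal in the same basis and hence commute. Once you add these verifications, your flag completes the proof.
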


\begin{proof}
It suffices to verify the Jacobi identity on generators. There are two cases in which the three terms are not identically zero. 

\textit{Case 1.} Consider $\{i,j\}\in E(G)$ and $\sigma_k\in \Sigma$ such that $[x_i,y_k]=\alpha_i x_i$, $[x_j,y_k]=\alpha_j x_j$, and $[x_{i,j},y_k]=\alpha_{i,j} x_{i,j}$ for some $\alpha_i,\alpha_j,\alpha_{i,j}\in \{0,1,2\}$. By inspection, $\alpha_i+\alpha_j=\alpha_{i,j}$ vanishes if $\{i,j\}\cap V(\sigma_k)=\{\emptyset, \{i\}, \{j\}, \{i,j\}\}$ yielding

\begin{align*}
        [x_i,[x_j,y_k]]+[x_j,[y_k,x_i]]+[y_k,[x_i,x_j]] & =\alpha_j[x_i,x_j]-\alpha_i[x_j,x_i]+\alpha_{i,j}[y_k,x_{ij}]\\
        & =(\alpha_i+\alpha_j-\alpha_{i,j})x_{i,j}\\
        & = 0\,.
\end{align*}

\textit{Case 2.} Let $\sigma_i,\sigma_j\in \Sigma$ and let $x\in \mathcal L(G)$ be a variable corresponding to either a vertex or an edge of $G$ so that $[x,y_i]=\alpha_i x$ and $[x,y_j]=\alpha_j x$ for some $\alpha_i,\alpha_j\in \{0,1,2\}$. Then 
\begin{equation*}
 [x,[y_i,y_j]]+[y_i,[y_j,x]]+[y_j,[x,y_i]] = 0 + \alpha_i\alpha_j x -\alpha_j\alpha_i x =0\,.
\end{equation*}
We conclude that $\LL(G,\Sigma)$ is a Lie algebra. Next, we show that $\LL(G,\Sigma)$ is completely solvable i.e.\  admits a finite chain of ideals $0= I_0\subseteq I_1\subset \cdots \subset I_n  = \mathcal L(G,\Sigma)$ so that $I_{i-1}$ is of codimension 1 in $I_i$ for each $i\in {1,\ldots,n}$. Let $\Sigma=\{\sigma_1,\sigma_2,\ldots,\sigma_s\}$  and let $\Sigma'=\{\sigma_2,\ldots,\sigma_s\}$. Then $\mathcal L(G,\Sigma')$ is an ideal of codimension one in $\mathcal L(G,\Sigma)$. Iterating this process, we obtain a finite ascending chain of ideals $\mathcal L(G)\subset J_1\subset J_2 \cdots \subset \mathcal L(G,\Sigma)$ each of codimension 1 into the next. Since $\mathcal L(G)=\mathcal L(G,\emptyset)$ is nilpotent, it is also completely solvable. Hence $\mathcal L(G,\Sigma)$ is completely solvable.
\end{proof}

\section{Canonical Decomposition of the cohomology of $\mathcal L(G)$}\label{sec:3}

\begin{mydef}
Let $G$ be a finite simple graph. We denote by $\mathcal C^\bullet(G)$ the {\it Cartan-Chevalley-Eilenberg complex}  of the Dani-Mainkar Lie algebra $\mathcal L(G)$. As a vector space, $\mathcal C^\bullet(G)$ is the full exterior algebra of the dual of $\mathcal L(G)$. Equivalently, $\mathcal C^\bullet(G)$ can be thought of as the space of skew-commuting real polynomials in the variables $\{x_i^*\}_{i\in V(G)}$ and $\{x_e^*\}_{e\in E(G)}$. The differential $Q$ acting on $\mathcal C^\bullet(G)$ is the odd derivation such that $Q(x_{i,j}^*)=x_i^*x_j^*$ for all $\{i,j\}\in E(G)$ such that $i<j$ and $Q(x_i^*)=0$ for all $i\in V(G)$. We denote by $H^*(\mathcal L(G))$ the cohomology of $\mathcal C^\bullet(G)$ and write $b_i(G)$ for the dimension of $H^i(\mathcal L(G))$. 
\end{mydef}

\begin{mydef}
Let $G$ be a finite simple graph. Given any subset $S\subseteq V(G)$, we denote the corresponding {\it induced subgraph} by $G[S]$ so that $V(G[S])=S$ and $E(G[S])$ consists of all $e\in E(G)$ such that both vertices of $e$ are in $S$. We define the  subcomplex $\mathcal C^\bullet(G,S)$ of $\mathcal C^\bullet(G)$ whose underlying vector space is spanned by the set of all monomials of the form
\begin{equation}\label{eq:7}
x_{i_1}^*x_{i_2}^*\cdots x_{i_p}^*x_{r_1,s_1}^*,\cdots,x_{r_q,s_q}^*
\end{equation}
for some $i_1,\ldots,i_p\in V(G)$ and $\{r_1,s_1\},\ldots,\{r_q,s_q\}\in E(G)$ such that (allowing for possible repetitions)
\begin{equation}
S=\{i_1,\ldots,i_p,r_1,s_1,\dots,r_q,s_q\}\,. 
\end{equation}
By convention we set $\mathcal C^\bullet(G,\emptyset)$ to be the trivial complex whose graded components are all zero-dimensional.
\end{mydef}

\begin{rem}
The fact that, as the terminology suggests, $\mathcal C^\bullet(G,S)$ is indeed closed under the action of $Q$ follows immediately from the definition of the latter as a derivation.
\end{rem}

\begin{example}
Let $G$ be the graph in Figure \ref{fig:0} and let $S=\{1,3,4\}$. Then $\mathcal C^1(G,S)=0$, while $\mathcal C^2(G,S)$ is spanned by $x_1^*x_{3,4}^*$, $x_4^*x_{1,3}^*$, and $x_{1,3}^*x_{3,4}^*$.
\end{example}

\begin{mydef}
Let $G$ be a finite simple graph. We denote the cohomology of the complex $\mathcal C^\bullet(G,V(G))$ by $\mathcal H^\bullet (G)$ and call it the {\it essential cohomology} of $G$. We write $\beta_i(G)$ for the dimension of $\mathcal H^i(G)$.
\end{mydef}

\begin{rem}\label{rem:19}
If $S$ is a subset of the vertices of the disjoint union $G_1+G_2$ of the finite simple graphs $G_1$ and $G_2$, then we have a canonical isomorphism of cochain complexes 
\begin{equation}
\mathcal C^\bullet(G_1+G_2,S)\cong \mathcal C^\bullet (G_1,S\cap V(G_1))\otimes \mathcal C^\bullet (G_2,S\cap V(G_2))\,.
\end{equation}
In particular, we obtain
\begin{equation}\label{eq:10}
\beta_i(G_1+G_2)=\sum_{j+k=i}\beta_j(G_1)\beta_k(G_2)
\end{equation}
For instance, since $\beta_i(K_1)=\delta_{i,1}$, it follows from \eqref{eq:10} that $\beta_i(nK_1)=\delta_{i,n}$ for all $i,n\ge 1$.
\end{rem}

\begin{theorem}\label{thm:20}
As a graded vector space, the cohomology of $\mathcal L(G)$ is canonically isomorphic to the direct sum of the essential cohomologies of the induced subgraphs of $G$. In particular,
\begin{equation}\label{eq:9}
b_i(G)=\sum_{S\subseteq V(G)} \beta_i(G[S])\,.
\end{equation}
\end{theorem}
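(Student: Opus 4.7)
My plan is to prove the theorem by exhibiting a direct sum decomposition of the cochain complex $\mathcal C^\bullet(G)$ into the subcomplexes $\mathcal C^\bullet(G,S)$ indexed by subsets $S\subseteq V(G)$, and then identifying each summand with the essential complex of the induced subgraph $G[S]$.

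First, I would assign to every basis monomial of $\mathcal C^\bullet(G)$ of the form \eqref{eq:7} its \emph{support}
\[
\mathrm{supp}(x_{i_1}^*\cdots x_{i_p}^* x_{r_1,s_1}^*\cdots x_{r_q,s_q}^*)=\{i_1,\ldots,i_p,r_1,s_1,\ldots,r_q,s_q\}\subseteq V(G).
\]
By construction $\mathcal C^\bullet(G,S)$ is spanned by the monomials of support exactly $S$, so as a graded vector space $\mathcal C^\bullet(G)=\bigoplus_{S\subseteq V(G)}\mathcal C^\bullet(G,S)$ simply by grouping basis monomials by their support. The key point to verify is that this decomposition is respected by the differential: since $Q$ is an odd derivation with $Q(x_i^*)=0$ and $Q(x_{r,s}^*)=x_r^*x_s^*$, applying $Q$ to a monomial as above produces a sum of monomials each of which differs from the original by replacing some edge generator $x_{r,s}^*$ with the product $x_r^*x_s^*$. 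Because $\{r,s\}$ was already contained in the support, the support is unchanged, so $Q$ preserves each summand $\mathcal C^\bullet(G,S)$.

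Next, I would identify $\mathcal C^\bullet(G,S)$ with $\mathcal C^\bullet(G[S],V(G[S]))$. Any monomial with support $S$ uses only vertex variables $x_i^*$ with $i\in S$ and edge variables $x_{r,s}^*$ with $\{r,s\}\subseteq S$; the latter are precisely the edges of the induced subgraph $G[S]$. Thus the obvious assignment identifies the spanning monomials on the two sides bijectively, and the action of $Q$ is clearly the same under this identification. Hence $\mathcal C^\bullet(G,S)\cong \mathcal C^\bullet(G[S],V(G[S]))$ as cochain complexes, and taking cohomology yields a canonical isomorphism with $\mathcal H^\bullet(G[S])$ (with the convention $\mathcal H^\bullet(G[\emptyset])=0$ absorbed into the $S=\emptyset$ term, which contributes nothing).

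Combining the two steps, we obtain canonical isomorphisms
\[
H^i(\mathcal L(G))=H^i\!\left(\bigoplus_{S\subseteq V(G)}\mathcal C^\bullet(G,S)\right)\cong\bigoplus_{S\subseteq V(G)} H^i(\mathcal C^\bullet(G[S],V(G[S])))=\bigoplus_{S\subseteq V(G)}\mathcal H^i(G[S]),
\]
from which \eqref{eq:9} follows by taking dimensions. The only subtle point to get right is the invariance of the support under $Q$; everything else is bookkeeping on basis monomials. I do not anticipate any serious obstacle, since the differential is purely combinatorial and the decomposition is essentially tautological once the notion of support is in place.
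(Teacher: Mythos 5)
Your proposal is correct and follows essentially the same route as the paper: decompose $\mathcal C^\bullet(G)$ by the support of basis monomials into the subcomplexes $\mathcal C^\bullet(G,S)$, identify each with $\mathcal C^\bullet(G[S],S)$ by the tautological map on monomials, and take cohomology. The only difference is that you spell out why $Q$ preserves the support, a point the paper disposes of in an earlier remark by appealing to $Q$ being a derivation.
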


\begin{proof}
Since each $\mathcal C^\bullet(G,S)$ is a subcomplex and each monomial expressed as in \eqref{eq:7} is contained in precisely one of the $C^\bullet(G,S)$, we have the canonical decomposition
\begin{equation}
\mathcal C^\bullet(G) \cong \bigoplus_{S\subseteq V(G)} \mathcal C^\bullet(G,S)
\end{equation}
as cochain complexes. Moreover, there is a canonical chain isomorphism mapping each \eqref{eq:7} in $\mathcal C^\bullet(G,S)$ to itself when viewed as an element of $\mathcal C^\bullet(G[S],S)$. Taking cohomology concludes the proof. 
\end{proof}

\begin{rem}\label{rem:21}
The degree of \eqref{eq:7} as an element of $\mathcal C^\bullet(G,S)$ satisfies the bounds
\begin{equation}
    \frac{|S|}{2}\le p+q\le |S|+|E(G[S])|\,.
\end{equation}
The lower bound can be improved to a strict inequality by observing that the terms of $Q(x_{r_1,s_1}^*\cdots x_{r_q,s_q}^*)$ are linearly independent, preventing any possible cancellation. As a result, \eqref{eq:9} can be restated as
\begin{equation}\label{eq:12}
b_i(G)=\sum_{|V(H)|\le 2i-1} \left|\binom{G}{H}\right| \beta_i(H)\,,
\end{equation}
where the sum is over all graphs $H$ with at most $2i-1$ vertices and $\left|\binom{G}{H}\right|$ denotes the number of occurrences of $H$ as an induced subgraph of $G$. In other words, the dimension of the cohomology of $\mathcal L(G)$ can be understood as a weighted count of the induced subgraphs of $G$, with weights given by dimensions of the corresponding essential cohomology groups. 
\end{rem}

\begin{example}
Since the induced subgraphs of complete graphs are necessarily complete, \eqref{eq:12} specializes to
\begin{equation}\label{eq:15}
b_i(K_n)=\sum_{r=1}^n \beta_i(K_r) \binom{n}{r}\,.
\end{equation}
Thus, taking the inverse binomial transform,
\begin{equation}
\beta_i(K_n)=\sum_{r=1}^n (-1)^{n-r} b_i(K_r)\binom{n}{r}\,.
\end{equation}
Since the dimensions of the cohomology groups of the free 2-step nilpotent Lie algebras $\mathcal L(K_n)$ have a natural representation-theoretic interpretation \cites{Kostant61, Sigg96}, the same can be said about the dimensions of the essential cohomology groups of complete graphs.
\end{example}

\begin{rem}
Each subcomplex $\mathcal C^\bullet(G,S)$ further decomposes into a direct sum of subcomplexes $\mathcal C^\bullet_N(G,S)$, labelled by non-negative integers $N$, whose underlying vector space is spanned by all monomials such that, in the notation of \eqref{eq:7}, $N=p+2q$. This gives rise to a bigraded refinement of the essential cohomology $\mathcal H^n(G)=\bigoplus_{r=0}^n \mathcal H^{n,r}(G)$, where
\begin{equation}\label{eq:17}
\mathcal H^{n,r}(G)=H^n(\mathcal C^\bullet_{n+r}(G,V(G)))\,.
\end{equation}
If $\beta_{n,r}(G)$ denotes the dimension of \eqref{eq:17}, then \eqref{eq:12} refines to
\begin{equation}
b_i(G)=\sum_{|V(H)|\le 2i-1} \sum_{r=0}^i\left|\binom{G}{H}\right| \beta_{i,r}(H)\,.
\end{equation}
\end{rem}

\begin{rem}\label{rem:14} According to \eqref{eq:12}, in order to calculate the first and second cohomology of $\mathcal L(G)$, it suffices to calculate the dimensions of the first and second essential cohomology groups of graphs with up to three vertices. It follows from Remark \ref{rem:21} that the only graph with non-zero first essential cohomology is $K_1$ and $\beta_1(K_1)=1$. As for the second cohomology, Remark \ref{rem:19} implies $\beta_2(K_1)=\beta_2(K_3)=\beta_2(K_2+K_1)=0$ and $\beta_2(2K_1)=1$. Moreover, a straightforward calculation shows that $\mathcal H^2(K_2)={\rm span}\{x_1^*x_2^*\}$, $\mathcal H^2(P_3)={\rm span}\{x_1^*x_{2,3}^*+x_{1,2}^*x_3^*\}$, and 
\begin{equation}
    \mathcal H^2(K_3)={\rm span}\{x_i^*x_{i+1,i+2}^*+x_{i,i+1}^*x_{i+2}^*, x_{i+1}^*x_{i+2,i+3}^*+x_{i+1,i+2}^*x_{i+3}^* \}
\end{equation} for any $i\in \mathbb Z/3\mathbb Z$.  As a result, we obtain a new short proof of the following theorem.
\end{rem}

\begin{theorem}[\cite{PouseeleTirao09}]\label{thm:26}
For any finite simple graph $G$, $b_1(G)=|V(G)|$ and
\begin{equation}
b_2(G)=\binom{|V(G)|}{2}+\frac{1}{2}\sum_{i\in V(G)}(\deg(i))^2-\left|\binom{G}{K_3}\right|\,,
\end{equation}
where $\deg(i)$ denotes the degree of the vertex $i$.
\end{theorem}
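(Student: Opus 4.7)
The plan is to combine the canonical decomposition of Theorem \ref{thm:20} with the essential-cohomology calculations compiled in Remark \ref{rem:14}, and then translate the resulting weighted count of induced subgraphs into the standard graph invariants appearing on the right-hand side of the formula. For the first cohomology, the strict lower bound in Remark \ref{rem:21} forces $\beta_1(H)=0$ unless $|V(H)|\le 1$; since $\beta_1(K_1)=1$, only the singleton subsets of $V(G)$ contribute to \eqref{eq:12}, immediately yielding $b_1(G)=|V(G)|$.

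For the second cohomology, Remark \ref{rem:21} restricts \eqref{eq:12} to induced subgraphs with at most three vertices. I would enumerate the seven isomorphism classes of such graphs, namely $K_1$, $2K_1$, $3K_1$, $K_2$, $K_2+K_1$, $P_3$, and $K_3$, and read off the dimensions $\beta_2(H)$ from Remark \ref{rem:14}; only $K_2$, $2K_1$, $P_3$, and $K_3$ give nonzero contributions. Expressing the numbers of induced copies of $K_2$, $2K_1$, and $K_3$ as $|E(G)|$, $\binom{|V(G)|}{2}-|E(G)|$, and $\left|\binom{G}{K_3}\right|$ respectively, the whole problem would reduce, via \eqref{eq:12}, to rewriting $\left|\binom{G}{P_3}\right|$ in terms of the degree sequence.

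The remaining step is a standard double-counting of \emph{cherries}, that is, unordered pairs of edges sharing a common vertex. The total number of cherries is $\sum_{v\in V(G)}\binom{\deg(v)}{2}$, and each cherry is either an induced $P_3$ (when its two outer vertices are non-adjacent) or one of the three cherries contained in an induced triangle, yielding the identity
\begin{equation*}
\left|\binom{G}{P_3}\right|=\sum_{v\in V(G)}\binom{\deg(v)}{2}-3\left|\binom{G}{K_3}\right|.
\end{equation*}
Substituting this identity into the sum obtained from \eqref{eq:12} and using $\sum_v\deg(v)=2|E(G)|$ to cancel the edge terms should produce the stated formula. The whole argument is essentially bookkeeping once Theorem \ref{thm:20} and the small-graph essential-cohomology data are in hand, and I do not anticipate any serious obstacle; the only step requiring a modicum of care is the double-counting identity for induced $P_3$'s and the subsequent algebraic simplification.
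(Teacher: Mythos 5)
Your proposal reproduces the paper's proof essentially verbatim: both arguments feed the essential-cohomology dimensions of the graphs on at most three vertices into \eqref{eq:12} and then use the cherry-counting identity $\left|\binom{G}{P_3}\right|+3\left|\binom{G}{K_3}\right|=\sum_{i\in V(G)}\binom{\deg(i)}{2}$ together with the Handshaking Lemma. The only detail to nail down is that $\beta_2(K_2)=2$ (spanned by the classes of $x_1^*x_{1,2}^*$ and $x_2^*x_{1,2}^*$), which is exactly what makes your anticipated cancellation of the edge terms work; be aware that the displayed description of $\mathcal H^2(K_2)$ in Remark \ref{rem:14} is a typo ($x_1^*x_2^*$ is a coboundary), so you should compute that coefficient directly rather than read it off from there.
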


\begin{proof} The first statement follows immediately from \eqref{eq:12}, since $\binom{G}{K_1}=V(G)$. To prove the second statement, we notice that Remark \ref{rem:14} implies
\begin{equation}
b_2(G)=\left|\binom{G}{2K_1} \right|+2\left|\binom{G}{K_2}\right|+\left|\binom{G}{P_3}\right|+2\left|\binom{G}{K_3}\right|\,.
\end{equation}
Then, noticing that 
\begin{equation}
\left|\binom{G}{2K_1} \right|+2\left|\binom{G}{K_2}\right| = \binom{|V(G)|}{2}+|E(G)|
\end{equation}
and
\begin{equation}
\left|\binom{G}{P_3}\right|+3\left|\binom{G}{K_3}\right|=\sum_{i\in V(G)}\binom{\deg(i)}{2}\,,
\end{equation}
we apply the Handshaking Lemma to conclude the proof.
\end{proof}

\begin{figure}
  \centering
  %P_2
  \begin{subfigure}{0.2\textwidth}
    \centering
    \begin{tikzpicture}
      % Draw the vertices
      \foreach \x in {0, 1}
          \fill (\x,0) circle[radius=2pt];
      
      % Draw the edges
      \draw (0,0) -- (1,0);
      
      % Add the label
      \node at (0.5, -1) {$ G_1: 1$};
    \end{tikzpicture}
    %\caption{Graph 1}
  \end{subfigure}
  \hfill
  %D_3
  \begin{subfigure}{0.2\textwidth}
    \centering
    \begin{tikzpicture}
      % Draw the vertices
      \foreach \x in {0, 1}
          \fill (\x,0) circle[radius=2pt];
      \fill (0.5,0.866) circle[radius=2pt];
      % Draw the edges
      %\draw (0,0) -- (1,0);
      
      % Add the label
      \node at (0.5, -1) {$G_2: 1$};
    \end{tikzpicture}
   % \caption{Graph 2}
  \end{subfigure}
  \hfill
  %P_2 \cup K_1
  \begin{subfigure}{0.2\textwidth}
    \centering
    \begin{tikzpicture}
      % Draw the vertices
      \foreach \x in {0, 1}
          \fill (\x,0) circle[radius=2pt];
      \fill (0.5,0.866) circle[radius=2pt];
      
      % Draw the edges
      \draw (0,0) -- (1,0);
      
      % Add the label
      \node at (0.5, -1) {$G_3: 2$};
    \end{tikzpicture}
    %\caption{Graph 3}
  \end{subfigure}
  \hfill
  \begin{subfigure}{0.2\textwidth}
    \centering
    \begin{tikzpicture}
      % Draw the vertices
      \foreach \x in {0, 1}
          \fill (\x,0) circle[radius=2pt];
      \fill (0.5,0.866) circle[radius=2pt];
      % Draw the edges
      \draw (0,0) -- (1,0) -- (.5,.866);

      % Add the label
      \node at (0.5, -1) {$G_4: 4$};
    \end{tikzpicture}
    %\caption{Graph 4}
  \end{subfigure}

\bigskip

  \centering
  %P_2
  \begin{subfigure}{0.2\textwidth}
    \centering
    \begin{tikzpicture}
      % Draw the vertices
      \foreach \x in {0, 1}
          \fill (\x,0) circle[radius=2pt];
      \fill (0.5,0.866) circle[radius=2pt];
      
      % Draw the edges
      \draw (0,0) -- (1,0) -- (0.5,0.866) -- cycle;
      
      % Add the label
      \node at (0.5, -1) {$G_5: 9$};
    \end{tikzpicture}
    %\caption{Graph 1}
  \end{subfigure}
  \hfill
  %D_3
  \begin{subfigure}{0.2\textwidth}
    \centering
    \begin{tikzpicture}
      % Draw the vertices
      \foreach \x in {0, 1}
          \fill (\x,0) circle[radius=2pt];
      \foreach \x in {0, 1}
          \fill (\x,1) circle[radius=2pt];
      % Draw the edges
      \draw (0,0) -- (1,0) -- (1,1);
      
      % Add the label
      \node at (0.5, -1) {$G_6: 1$};
    \end{tikzpicture}
   % \caption{Graph 2}
  \end{subfigure}
  \hfill
  %P_2 \cup K_1
  \begin{subfigure}{0.2\textwidth}
    \centering
    \begin{tikzpicture}
      % Draw the vertices
      \foreach \x in {0, 1}
          \fill (\x,0) circle[radius=2pt];
      \foreach \x in {0, 1}
          \fill (\x,1) circle[radius=2pt];
      
      % Draw the edges
      \draw (0,0) -- (1,0) -- (1,1) --(0,1);
      
      % Add the label
      \node at (0.5, -1) {$G_7: 1$};
    \end{tikzpicture}
    %\caption{Graph 3}
  \end{subfigure}
  \hfill
  \begin{subfigure}{0.2\textwidth}
    \centering
    \begin{tikzpicture}
      % Draw the vertices
      \foreach \x in {0, 1}
          \fill (\x,0) circle[radius=2pt];
      \foreach \x in {0, 1}
          \fill (\x,1) circle[radius=2pt];
      
      % Draw the edges
      \draw (0,0) -- (1,0) -- (1,1) --(0,1) -- cycle;
      
      % Add the label
      \node at (0.5, -1) {$G_8: 5$};
    \end{tikzpicture}
    %\caption{Graph 4}
  \end{subfigure}

\bigskip

  \centering
  %P_2
  \begin{subfigure}{0.2\textwidth}
    \centering
    \begin{tikzpicture}
      % Draw the vertices
       \foreach \x in {0, 1}
          \fill (\x,0) circle[radius=2pt];
      \foreach \x in {0, 1}
          \fill (\x,1) circle[radius=2pt];
      
      % Draw the edges
      \draw (0,0) -- (1,0) -- (1,1) -- cycle;
      
      % Add the label
      \node at (0.5, -1) {$G_9: 2$};
    \end{tikzpicture}
    %\caption{Graph 1}
  \end{subfigure}
  \hfill
  %D_3
  \begin{subfigure}{0.2\textwidth}
    \centering
    \begin{tikzpicture}
      % Draw the vertices
      \foreach \x in {0, 1}
          \fill (\x,0) circle[radius=2pt];
      \foreach \x in {0, 1}
          \fill (\x,1) circle[radius=2pt];
      % Draw the edges
      \draw (0,0) -- (1,0) -- (1,1);
      \draw (1,0) -- (0,1);
      
      % Add the label
      \node at (0.5, -1) {$G_{10}: 2$};
    \end{tikzpicture}
   % \caption{Graph 2}
  \end{subfigure}
  \hfill
  %P_2 \cup K_1
  \begin{subfigure}{0.2\textwidth}
    \centering
    \begin{tikzpicture}
      % Draw the vertices
      \foreach \x in {0, 1}
          \fill (\x,0) circle[radius=2pt];
      \foreach \x in {0, 1}
          \fill (\x,1) circle[radius=2pt];
      
      % Draw the edges
      \draw (0,0) -- (1,0) -- (1,1) --(0,1);
      \draw (0,0) -- (1,1);
      
      % Add the label
      \node at (0.5, -1) {$G_{11}: 6$};
    \end{tikzpicture}
    %\caption{Graph 3}
  \end{subfigure}
  \hfill
  \begin{subfigure}{0.2\textwidth}
    \centering
    \begin{tikzpicture}
      % Draw the vertices
      \foreach \x in {0, 1}
          \fill (\x,0) circle[radius=2pt];
      \foreach \x in {0, 1}
          \fill (\x,1) circle[radius=2pt];
      
      % Draw the edges
      \draw (0,0) -- (1,0) -- (1,1) --(0,1) -- cycle;
      \draw (0,0) -- (1,1);
      
      % Add the label
      \node at (0.5, -1) {$G_{12}: 14$};
    \end{tikzpicture}
    %\caption{Graph 4}
  \end{subfigure}

\bigskip

  \centering
  %P_2
  \begin{subfigure}{0.2\textwidth}
    \centering
    \begin{tikzpicture}
      % Draw the vertices
       \foreach \x in {0, 1}
          \fill (\x,0) circle[radius=2pt];
      \foreach \x in {0, 1}
          \fill (\x,1) circle[radius=2pt];
      
      % Draw the edges
      \draw (0,0) -- (1,0) -- (1,1) -- cycle;
      \draw (1,0) -- (0,1) -- (1,1);
      \draw (0,0) -- (0,1);
      
      % Add the label
      \node at (0.5, -1) {$G_{13}: 26$};
    \end{tikzpicture}
    %\caption{Graph 1}
  \end{subfigure}
  \hfill
  %D_3
  \begin{subfigure}{0.2\textwidth}
    \centering
    \begin{tikzpicture}
      % Draw the vertices
     
      \fill (1.288,0.247+.45) circle[radius=2pt];
      \fill (.991,-.64+.45) circle[radius=2pt];
      \fill (.058,-.64+.45) circle[radius=2pt];
      \fill (-.257,0.247+.45) circle[radius=2pt];
      \fill (.528 ,.8 + .45) circle[radius=2pt];

      % Draw the edges
      \draw (-.257,0.247+.45) -- (.528,.8+.45) -- (1.288,0.247+.45)-- (.991,-.64+.45) -- (0.058,-.64+.45);
      \draw (0.058,-.64+.45) --(.528,.8+.45);
      \draw (-.257,0.247+.45) -- (.991,-.64+.45);

      % Add the label
      \node at (0.5, -1) {$G_{14} : 1$};
    \end{tikzpicture}
   % \caption{Graph 2}
  \end{subfigure}
  \hfill
  %P_2 \cup K_1
  \begin{subfigure}{0.2\textwidth}
    \centering
    \begin{tikzpicture}
      % Draw the vertices
      \fill (1.288,0.247+.45) circle[radius=2pt];
      \fill (.991,-.64+.45) circle[radius=2pt];
      \fill (.058,-.64+.45) circle[radius=2pt];
      \fill (-.257,0.247+.45) circle[radius=2pt];
      \fill (.528 ,.8+.45) circle[radius=2pt];

      % Draw the edges
      \draw (-.257,0.247+.45) -- (.528,.8+.45) -- (1.288,0.247+.45)-- (.991,-.64+.45) -- (0.058,-.64+.45);
      \draw (0.058,-.64+.45) --(.528,.8+.45) -- (.991,-.64+.45)--(-.257,0.247+.45);
      
      % Add the label
      \node at (0.5, -1) {$G_{15}: 1$};
    \end{tikzpicture}
    %\caption{Graph 3}
  \end{subfigure}
  \hfill
  \begin{subfigure}{0.2\textwidth}
    \centering
    \begin{tikzpicture}
      % Draw the vertices
      \fill (1.288,0.247+.45) circle[radius=2pt];
      \fill (.991,-.64+.45) circle[radius=2pt];
      \fill (.058,-.64+.45) circle[radius=2pt];
      \fill (-.257,0.247+.45) circle[radius=2pt];
      \fill (.528 ,.8+.45) circle[radius=2pt];

      % Draw the edges
      \draw (-.257,0.247+.45) -- (.528,.8+.45) -- (1.288,0.247+.45)-- (.991,-.64+.45) -- (0.058,-.64+.45) -- cycle;
      
      % Add the label
      \node at (0.5, -1) {$G_{16}: 1$};
    \end{tikzpicture}
    %\caption{Graph 4}
  \end{subfigure}

\bigskip

  \centering
  %P_2
  \begin{subfigure}{0.2\textwidth}
    \centering
    \begin{tikzpicture}
      % Draw the vertices
      
      \fill (1.288,0.247+.45) circle[radius=2pt];
      \fill (.991,-.64+.45) circle[radius=2pt];
      \fill (.058,-.64+.45) circle[radius=2pt];
      \fill (-.257,0.247+.45) circle[radius=2pt];
      \fill (.528 ,.8+.45) circle[radius=2pt];

      % Draw the edges
      \draw (-.257,0.247+.45) -- (.528,.8+.45) -- (1.288,0.247+.45)-- (.991,-.64+.45) -- (0.058,-.64+.45) -- cycle;
      \draw (0.058,-.64+.45) --(.528,.8+.45);

      % Add the label
      \node at (0.5, -1) {$G_{17}: 1$};
    \end{tikzpicture}
    %\caption{Graph 1}
  \end{subfigure}
  \hfill
  %D_3
  \begin{subfigure}{0.2\textwidth}
    \centering
    \begin{tikzpicture}
      % Draw the vertices
     
      \fill (1.288,0.247+.45) circle[radius=2pt];
      \fill (.991,-.64+.45) circle[radius=2pt];
      \fill (.058,-.64+.45) circle[radius=2pt];
      \fill (-.257,0.247+.45) circle[radius=2pt];
      \fill (.528 ,.8+.45) circle[radius=2pt];

      % Draw the edges
      \draw (-.257,0.247+.45) -- (.528,.8+.45) -- (1.288,0.247+.45)-- (.991,-.64+.45) -- (0.058,-.64+.45) -- cycle;
      \draw (0.058,-.64+.45) --(.528,.8+.45);
      \draw (.528,.8+.45) -- (.991,-.64+.45);

      % Add the label
      \node at (0.5, -1) {$G_{18} : 1$};
    \end{tikzpicture}
   % \caption{Graph 2}
  \end{subfigure}
  \hfill
  %P_2 \cup K_1
  \begin{subfigure}{0.2\textwidth}
    \centering
    \begin{tikzpicture}
      % Draw the vertices
      \fill (1.288,0.247+.45) circle[radius=2pt];
      \fill (.991,-.64+.45) circle[radius=2pt];
      \fill (.058,-.64+.45) circle[radius=2pt];
      \fill (-.257,0.247+.45) circle[radius=2pt];
      \fill (.528 ,.8+.45) circle[radius=2pt];

      % Draw the edges
      \draw (-.257,0.247+.45) -- (.528,.8+.45) -- (1.288,0.247+.45)-- (.991,-.64+.45) -- (0.058,-.64+.45) -- cycle;
      \draw (0.058,-.64+.45) --(.528,.8+.45);
      \draw (.991,-.64+.45)--(-.257,0.247+.45);
      
      % Add the label
      \node at (0.5, -1) {$G_{19}: 2$};
    \end{tikzpicture}
    %\caption{Graph 3}
  \end{subfigure}
  \hfill
  \begin{subfigure}{0.2\textwidth}
    \centering
    \begin{tikzpicture}
      % Draw the vertices
      \fill (1.288,0.247+.45) circle[radius=2pt];
      \fill (.991,-.64+.45) circle[radius=2pt];
      \fill (.058,-.64+.45) circle[radius=2pt];
      \fill (-.257,0.247+.45) circle[radius=2pt];
      \fill (.528 ,.8+.45) circle[radius=2pt];

      % Draw the edges
      \draw (-.257,0.247+.45) -- (.528,.8+.45) -- (1.288,0.247+.45)-- (.991,-.64+.45) -- (0.058,-.64+.45) -- cycle;
      \draw (0.058,-.64+.45) --(.528,.8+.45);
      \draw (.528,.8+.45) -- (.991,-.64+.45);
      \draw (-.257,0.247+.45) -- (.991,-.64+.45);

      % Add the label
      \node at (0.5, -1) {$G_{20}: 2$};
    \end{tikzpicture}
    %\caption{Graph 4}
  \end{subfigure}

\bigskip

  \centering
  %P_2
  \begin{subfigure}{0.2\textwidth}
    \centering
    \begin{tikzpicture}
      % Draw the vertices
      
      \fill (1.288,0.247+.45) circle[radius=2pt];
      \fill (.991,-.64+.45) circle[radius=2pt];
      \fill (.058,-.64+.45) circle[radius=2pt];
      \fill (-.257,0.247+.45) circle[radius=2pt];
      \fill (.528 ,.8+.45) circle[radius=2pt];

      % Draw the edges
     \draw (-.257,0.247+.45) -- (.528,.8+.45) -- (1.288,0.247+.45)-- (.991,-.64+.45) -- (0.058,-.64+.45) -- cycle;
      \draw (0.058,-.64+.45) --(.528,.8+.45);
      \draw (.528,.8+.45) -- (.991,-.64+.45);
      \draw (-.257,0.247+.45) -- (1.288,0.247+.45);

      % Add the label
      \node at (0.5, -1) {$G_{21}: 3$};
    \end{tikzpicture}
    %\caption{Graph 1}
  \end{subfigure}
  \hfill
  %D_3
  \begin{subfigure}{0.2\textwidth}
    \centering
    \begin{tikzpicture}
      % Draw the vertices
     
      \fill (1.288,0.247+.45) circle[radius=2pt];
      \fill (.991,-.64+.45) circle[radius=2pt];
      \fill (.058,-.64+.45) circle[radius=2pt];
      \fill (-.257,0.247+.45) circle[radius=2pt];
      \fill (.528 ,.8+.45) circle[radius=2pt];

      % Draw the edges
      \draw (-.257,0.247+.45) -- (.528,.8+.45) -- (1.288,0.247+.45)-- (.991,-.64+.45) -- (0.058,-.64+.45) -- cycle;
      \draw (0.058,-.64+.45) --(.528,.8+.45);
      \draw (.528,.8+.45) -- (.991,-.64+.45);
      \draw (-.257,0.247+.45) -- (1.288,0.247+.45);
      \draw (-.257,0.247+.45) -- (.991,-.64+.45);
      
      % Add the label
      \node at (0.5, -1) {$G_{22} : 4$};
    \end{tikzpicture}
   % \caption{Graph 2}
  \end{subfigure}
  \hfill
  %P_2 \cup K_1
  \begin{subfigure}{0.2\textwidth}
    \centering
    \begin{tikzpicture}
      % Draw the vertices
      \fill (1.288,0.247+.45) circle[radius=2pt];
      \fill (.991,-.64+.45) circle[radius=2pt];
      \fill (.058,-.64+.45) circle[radius=2pt];
      \fill (-.257,0.247+.45) circle[radius=2pt];
      \fill (.528 ,.8+.45) circle[radius=2pt];

      % Draw the edges
      \draw (-.257,0.247+.45) -- (.528,.8+.45) -- (1.288,0.247+.45)-- (.991,-.64+.45) -- (0.058,-.64+.45) -- cycle;
      \draw (0.058,-.64+.45) --(.528,.8+.45) -- (.991,-.64+.45) -- (-.257,0.247+.45) --(1.288,0.247+.45) -- cycle;

      % Add the label
      \node at (0.5, -1) {$G_{23}: 6$};
    \end{tikzpicture}
    %\caption{Graph 3}
  \end{subfigure}
  \hfill
  \begin{subfigure}{0.2\textwidth}
    \centering
    \begin{tikzpicture}
      % Draw the vertices

    \end{tikzpicture}
    %\caption{Graph 4}
  \end{subfigure}
  \caption{The 23 graphs with non-vanishing third essential cohomology and the corresponding values of $\beta_3(G)$.}
  \label{fig:dim}
  
\end{figure}
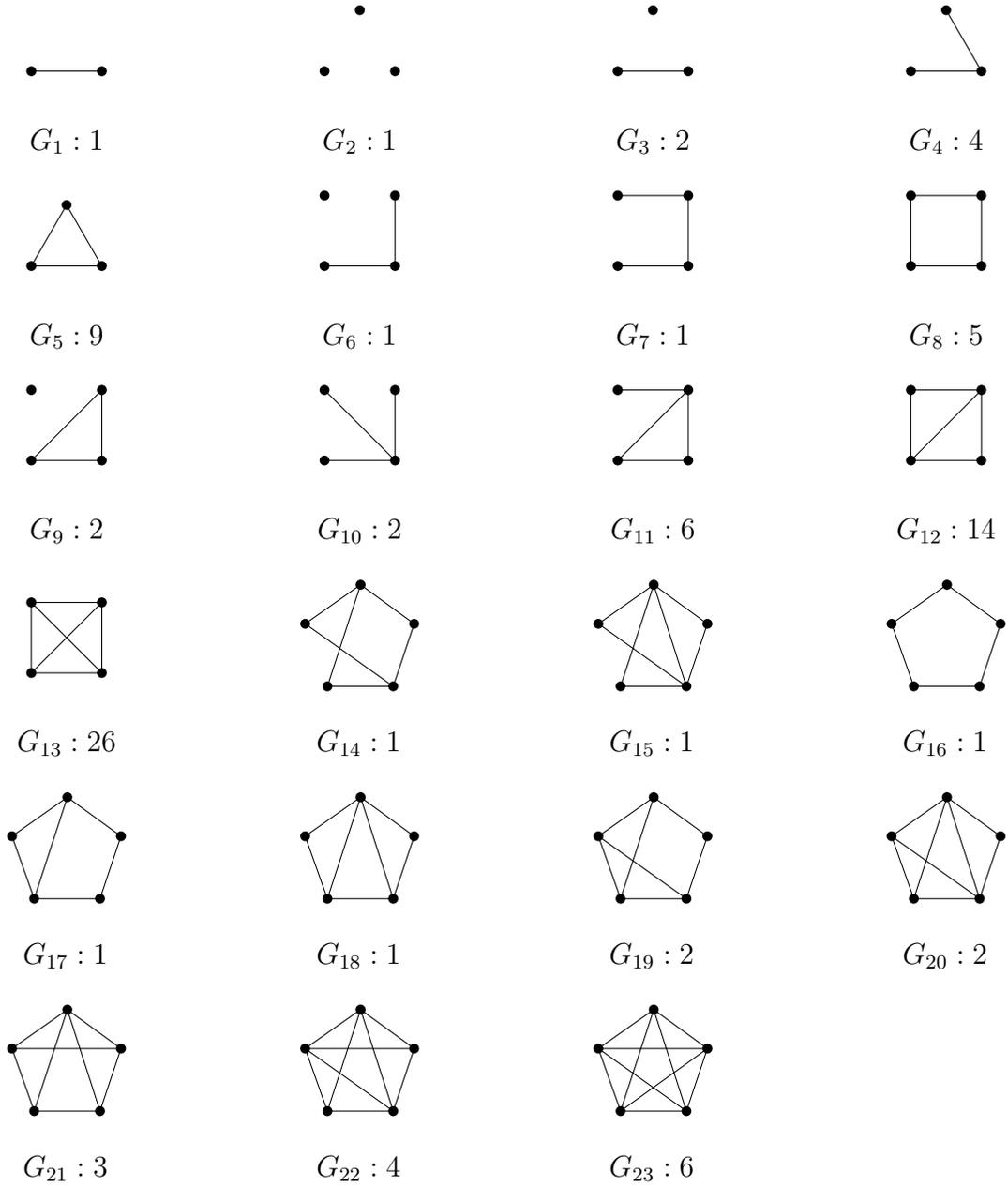

\begin{theorem}\label{thm:25}
Let $G$ be a finite simple graph. Then
\begin{equation}\label{eq:22}
b_3(G)=\sum_{j=1}^{23} \beta_3(G_j)\left|\binom{G}{G_j}\right|
\end{equation}
where $G_1,\ldots,G_{23}$ are the graphs in Figure \ref{fig:dim}. 
\end{theorem}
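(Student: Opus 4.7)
By specializing \eqref{eq:12} to $i=3$, we obtain
\begin{equation}
b_3(G) \,=\, \sum_{|V(H)|\le 5}\left|\binom{G}{H}\right|\beta_3(H)\,,
\end{equation}
where the sum is over isomorphism classes of finite simple graphs with at most $5$ vertices. There are only $52$ such classes, so the theorem reduces to the assertion that exactly the $23$ graphs in Figure \ref{fig:dim} contribute nontrivially, with the indicated multiplicities. The plan is to establish this through a finite case analysis organized as follows.

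First, I would dispose of the disconnected graphs. By Remark \ref{rem:19}, if $H=H_1+H_2$ then $\beta_3(H) = \sum_{j+k=3}\beta_j(H_1)\beta_k(H_2)$. Since $\beta_0$ vanishes on any nonempty graph and, by Remark \ref{rem:21}, $\beta_1$ is nonzero only for $K_1$, a disconnected $H$ has $\beta_3(H)\neq 0$ precisely when it has the form $K_1 + H'$ with $\beta_2(H')\neq 0$. The graphs $H'$ with $\beta_2(H')\neq 0$ can be read off from the expansion of $b_2$ appearing in the proof of Theorem \ref{thm:26}: they are exactly $2K_1$, $K_2$, $P_3$, and $K_3$, with $\beta_2$ equal to $1,2,1,2$ respectively. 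The multiplicativity formula then yields the four disconnected graphs $G_2=3K_1$, $G_3=K_2+K_1$, $G_6=P_3+K_1$, $G_9=K_3+K_1$ with $\beta_3$ equal to $1,2,1,2$, respectively.

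Second, I would compute $\beta_3(H)$ directly for each connected $H$ with $|V(H)|\le 5$: the three small cases $K_2$, $P_3$, $K_3$; the six connected $4$-vertex graphs ($P_4$, $C_4$, $K_{1,3}$, the paw, $K_4-e$, $K_4$); and the twenty-one connected $5$-vertex graphs. For each $H$ this amounts to writing down a basis of $\mathcal C^\bullet(H,V(H))$ in degrees $2$ and $3$ and computing the rank of $Q$. The task is made tractable by the bigrading refinement of Remark \ref{rem:21}: since $Q$ preserves the internal degree $N=p+2q$, the complex decomposes into blocks $\mathcal C_N^\bullet(H,V(H))$ that can be treated independently. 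The small cases yield $G_1,G_4,G_5$; all six connected $4$-vertex graphs survive (as $G_7,G_8,G_{10},G_{11},G_{12},G_{13}$); and exactly ten of the twenty-one connected $5$-vertex graphs survive, yielding $G_{14},\ldots,G_{23}$.

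The principal obstacle is the case analysis for the $5$-vertex connected graphs: the dimensions of the relevant blocks grow with the number of edges, and the rank computation must be carried out carefully for each graph. Exploiting graph symmetries and the block decomposition keeps each individual calculation of modest size, but one must also verify that the eleven connected $5$-vertex graphs not appearing in Figure \ref{fig:dim} genuinely satisfy $\beta_3=0$. Once this finite enumeration is complete, collecting all contributions yields \eqref{eq:22}.
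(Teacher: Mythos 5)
Your proposal is correct and follows essentially the same route as the paper: both specialize \eqref{eq:12} to $i=3$, reducing the theorem to a finite computation of $\beta_3$ over the isomorphism classes of graphs with at most five vertices, which the paper simply delegates to a brute-force (SageMath) check. Your additional organization---dispatching the disconnected cases via the K\"unneth-type formula of Remark \ref{rem:19} and splitting the connected cases into the bigraded blocks of Remark \ref{rem:21}---is a sensible refinement of that same finite verification rather than a genuinely different argument.
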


\begin{proof}
By Remark \ref{rem:21}, only graphs with at most five vertices have non-vanishing cohomology. Moreover $K_1$ and $K_2$ cannot support any monomial of degree 3. The essential cohomology of the remaining 50 (unlabeled) graphs can be calculated by brute force (e.g.\ using SageMath \cite{sagemath}). Of these 50, only the 23 graphs listed in Figure \ref{fig:dim} have a non-vanishing third essential cohomology.
\end{proof}

\begin{example}\label{ex:24}
Let $G$ be the graph with vertices $\{1,\ldots,6\}$ and edges $\{1,2\}$, $\{2,3\}$, $\{3,4\}$, $\{1,4\}$, $\{1,5\}$, $\{5,6\}$ as depicted in Figure \ref{fig:2}. Then $G$ contains the following induced subgraphs: 6 copies of $G_1=P_2$; 3 copies of $G_2=3K_1$; 10 copies of $G_3=K_2+K_1$; 7 copies of $G_4=P_3$; 6 copies of $G_6=P_3+K_1$; 4 copies of $G_7=P_4$; 1 copy of $G_8=C_4$; and 1 copy of $G_{10}=S_3$. Substituting into \eqref{eq:22} and using the values of the third essential cohomology of these induced subgraphs, as tabulated in Figure \ref{fig:dim}, we obtain $b_3(G)=74$.
\end{example}

\begin{example}
Substituting the dimensions of the third essential cohomology groups from Figure \ref{fig:dim} into \eqref{eq:15} we obtain, in accordance with \cites{Kostant61, Sigg96}, 
\begin{equation}
b_3(K_n)=\binom{n}{2}+9\binom{n}{3}+26\binom{n}{4}+6\binom{n}{5} = \frac{3 n^5 + 35 n^4 - 195 n^3 + 325 n^2 - 168n}{60}
\end{equation}
for every $n\ge 1$. 
\end{example}

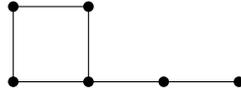
\begin{figure}
    \centering
    \begin{tikzpicture}
      % Draw the vertices
     
      \foreach \x in {0,1,2,3}
          \fill (\x,0) circle[radius=2pt];
      \foreach \x in {0,1}
          \fill (\x,1) circle[radius=2pt];

      % Draw the edges
      \draw (0,0) -- (0,1) -- (1,1) -- (1,0) -- cycle;
      \draw (1,0) -- (2,0)--(3,0);

    \end{tikzpicture}
    \caption{The graph $G$ as defined in Example \ref{ex:24}}\label{fig:2}
  \end{figure}

\begin{rem}\label{rem:28}
While surely there is an analogue of Theorem \ref{thm:25} in every degree, in degree 4 it would already involve pre-calculating the fourth essential cohomology of each of the 1249 graphs whose number of vertices is between 3 (the minimum to support a monomial of degree 4) and 7. Hence a general formula quickly becomes impractical in degree four or higher.
\end{rem}

\section{Cohomology of Dani-Mainkar Lie Algebras Associated with Star Graphs}

\begin{rem}
Remark \ref{rem:28} notwithstanding, Theorem \ref{thm:20} can be used to inductively calculate the full cohomology of Dani-Mainkar Lie algebras of special families of graphs. To illustrate this point, we focus on {\it star graphs}
$S_n$ with vertices $V(S_n)=\{1,\ldots,n+1\}$ and edges $E(S_n)=\{\{1,2\},\ldots,\{1,n+1\}\}$. While formulas for the dimensions $b_k(S_n)$ of the cohomology groups are known \cite{ArmstrongCairnsJessup97}, our derivation is different as it involves first calculating the dimensions $\beta_k(S_n)$ of the essential cohomology and leveraging the particularly simple structure of the subgraphs of $S_n$. Our methods can, at least in principle, be applied to more general classes of graphs such as trees while the proof given in \cite{ArmstrongCairnsJessup97} heavily relies on the privileged nature of the vertex 1 in $S_n$.
\end{rem}

\begin{lem}\label{lem:30}
There is a long exact sequence
\begin{equation}\label{eq:dixmier}
\cdots\rightarrow H^{i-1}(\mathcal D^\bullet)\xrightarrow{\delta}H^{i-1}(\mathcal D^\bullet)\xrightarrow{x_{n+1}^*\cdot}H^i(\mathcal L(S_n))\xrightarrow{\pi} H^i(\mathcal D^\bullet)\xrightarrow{\delta} H^{i+1}(\mathcal D^\bullet)\rightarrow \cdots
\end{equation}
where
\begin{enumerate}[i)]
\item $\mathcal D^\bullet$ is the subcomplex of $\mathcal C^\bullet(S_n)$ consisting of polynomials that do not contain the variable $x_{n+1}^*$;
\item $x_{n+1}^*\cdot$ denotes left multiplication by $x_{n+1}^*$;
\item $\pi$ is induced by the natural projection $\frac{\partial}{\partial x_{n+1}^*}\circ x_{n+1}^*\cdot:\mathcal C^\bullet(S_n)\to \mathcal D^\bullet$;
\item $\delta=\frac{\partial}{\partial x_{n+1}^*} \circ Q$.
\end{enumerate}
\end{lem}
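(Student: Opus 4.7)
The plan is to construct the long exact sequence from a short exact sequence of cochain complexes coming from the $x_{n+1}^*$-grading on $\mathcal{C}^\bullet(S_n)$. Every monomial either contains $x_{n+1}^*$ or does not, giving a graded vector space decomposition $\mathcal{C}^\bullet(S_n) = \mathcal{D}^\bullet \oplus x_{n+1}^*\mathcal{D}^\bullet$. Crucially, $x_{n+1}^*\mathcal{D}^\bullet$ is closed under $Q$: writing $Q(x_{n+1}^*\beta) = -x_{n+1}^*Q(\beta)$, any $x_{n+1}^*$-component of $Q(\beta)$ is annihilated by $x_{n+1}^*\cdot x_{n+1}^* = 0$, so the output remains in $x_{n+1}^*\mathcal{D}^\bullet$. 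We therefore have a short exact sequence of cochain complexes
\begin{equation*}
0 \longrightarrow x_{n+1}^*\mathcal{D}^\bullet \hookrightarrow \mathcal{C}^\bullet(S_n) \xrightarrow{\ \pi\ } \mathcal{D}^\bullet \longrightarrow 0\,,
\end{equation*}
where $\pi$ is the projection onto the first summand and $\mathcal{D}^\bullet$ inherits the quotient differential, which agrees with the $\mathcal{D}^\bullet$-component $\widetilde{Q}$ of $Q$.

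To verify (i) and (iv), I would write $Q|_{\mathcal{D}^\bullet} = \widetilde{Q} + x_{n+1}^*\delta$ with $\delta = \frac{\partial}{\partial x_{n+1}^*}\circ Q$, and expand the identity $Q^2 = 0$; matching the two summands yields $\widetilde{Q}^2 = 0$ and $\widetilde{Q}\delta = \delta\widetilde{Q}$, so $(\mathcal{D}^\bullet,\widetilde{Q})$ really is a cochain complex and $\delta$ is a chain endomorphism of degree zero. Next, the differential on $x_{n+1}^*\mathcal{D}^\bullet$ computed above becomes $x_{n+1}^*\beta \mapsto -x_{n+1}^*\widetilde{Q}(\beta)$, so the vector-space isomorphism $x_{n+1}^*\beta \leftrightarrow \beta$ identifies $x_{n+1}^*\mathcal{D}^\bullet$ (up to an overall sign) with $\mathcal{D}^\bullet$ shifted in degree by $+1$; in particular $H^i(x_{n+1}^*\mathcal{D}^\bullet) \cong H^{i-1}(\mathcal{D}^\bullet)$.

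Applying the snake lemma to the short exact sequence and substituting this identification yields the desired long exact sequence, with the map induced by inclusion translating to left multiplication by $x_{n+1}^*$, matching (ii), and the quotient map translating to $\pi$, matching (iii). The only non-formal step is identifying the connecting homomorphism with $\delta$: given a class $[\alpha] \in H^i(\mathcal{D}^\bullet)$ represented by a cocycle $\alpha$ with $\widetilde{Q}(\alpha) = 0$, the lift to $\mathcal{C}^i(S_n)$ satisfies $Q(\alpha) = x_{n+1}^*\delta(\alpha)$, from which we read off the class $[\delta(\alpha)]$ via the degree-shift isomorphism, matching (iv). The main obstacle, though entirely routine, is the careful sign bookkeeping forced by the odd-variable commutation rule together with the degree shift; once this is handled, the statement reduces to formal homological algebra.
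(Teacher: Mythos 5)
Your proposal is correct, but it is worth noting that the paper does not actually prove this lemma: its ``proof'' is a one-line citation to the Dixmier exact sequence \cite{Dixmier55}, as specialized in \cite{PouseeleTirao09}, with a pointer to \cite{AldiBevins23} for a direct argument. What you have written is essentially that direct argument, carried out in full: the short exact sequence $0\to x_{n+1}^*\mathcal D^\bullet\to \mathcal C^\bullet(S_n)\to \mathcal D^\bullet\to 0$, the degree-shift identification $H^i(x_{n+1}^*\mathcal D^\bullet)\cong H^{i-1}(\mathcal D^\bullet)$, and the computation of the connecting homomorphism as $\delta=\frac{\partial}{\partial x_{n+1}^*}\circ Q$. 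All the individual steps check out: $Q(x_{n+1}^*)=0$ gives $Q(x_{n+1}^*\beta)=-x_{n+1}^*Q(\beta)$, so $x_{n+1}^*\mathcal D^\bullet$ is genuinely $Q$-closed; the decomposition $Q|_{\mathcal D^\bullet}=\widetilde Q+x_{n+1}^*\delta$ together with $Q^2=0$ yields $\widetilde Q^2=0$ and $[\widetilde Q,\delta]=0$; and the boundary map of the snake lemma is read off correctly from $Q(\alpha)=x_{n+1}^*\delta(\alpha)$ for a $\widetilde Q$-cocycle $\alpha$. You are also more careful than the lemma statement itself on one point: since $Q(x_{1,n+1}^*)=x_1^*x_{n+1}^*$ does not lie in $\mathcal D^\bullet$, the space $\mathcal D^\bullet$ is not literally a subcomplex for $Q$ but rather the quotient $\mathcal C^\bullet(S_n)/x_{n+1}^*\mathcal C^\bullet(S_n)$ equipped with the induced differential $\widetilde Q$, which is exactly how you treat it. The trade-off is the expected one: the paper's citation is shorter and places the lemma in the general context of Dixmier's sequence for a codimension-one ideal, while your derivation is self-contained and makes the maps $x_{n+1}^*\cdot$, $\pi$, and $\delta$ completely explicit, which is in fact what the subsequent analysis in Remarks \ref{rem:31} and \ref{rem:32} relies on. (As a side remark, the degree bookkeeping in your argument also confirms that $\delta$ has degree zero on $\mathcal D^\bullet$, so the final arrow $H^i(\mathcal D^\bullet)\xrightarrow{\delta}H^{i+1}(\mathcal D^\bullet)$ in the displayed sequence \eqref{eq:dixmier} should read $H^i(\mathcal D^\bullet)\xrightarrow{\delta}H^{i}(\mathcal D^\bullet)$; this is a typo in the statement, not a flaw in your proof.)
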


\begin{proof}
This is a particular case of the Dixmier exact sequence \cite{Dixmier55}, applied to the Dani-Mainkar construction as in \cite{PouseeleTirao09} (see also \cite{AldiBevins23} for a direct proof adapted to this context). 
\end{proof}

\begin{rem}\label{rem:31}
Since $H^i(\mathcal D^\bullet)$ is canonically isomorphic to the direct sum of $H^i(\mathcal L(S_{n-1}))$ and $x_{1,n+1}^*H^{i-1}(\mathcal L(S_{n-1}))$, then $H^i(\mathcal L(S_n))$ is isomorphic to the direct sum of:
\begin{enumerate}[i)]
\item $x_{n+1}^*{\rm coker}(x_1^*\cdot:H^{i-2}(\mathcal L(S_{n-1}))\to H^{i-1}(\mathcal L(S_{n-1})))$;
\item $x_{n+1}^*x_{1,n+1}^*H^{i-2}(\mathcal L(S_{n-1}))$;
\item $H^i(\mathcal L(S_{n-1}))$;
\item $x_{1,n+1}^*\ker(x_1^*\cdot:H^{i-1}(\mathcal L(S_{n-1}))\to H^i(\mathcal L(S_{n-1})))$.
\end{enumerate}
Direct summands i)-iii) are canonically identified with elements of $H^i(\mathcal L(S_n))$. This is not the case for summand iv) which corresponds to elements of $H^i(\mathcal D^\bullet)$ that lift nontrivially, to $H^i(\mathcal L(S_n))$ under $\pi$.
\end{rem}

\begin{rem}\label{rem:32}
Let us specialize Remark \ref{rem:31} to the case $i=n$ and focus on the essential cohomology. Since monomials in $\mathcal C^\bullet(S_n,V(S_n))$ must be of degree at least $n$ in order to account for all the vertices, the essential cohomology of $S_n$ is concentrated in degrees $\{n,n+1,\ldots,2n+1\}$. It follows that the contribution to $\mathcal H^{n,r}(S_n)$ from i) is isomorphic to $x_{n+1}^*\mathcal H^{n-1,r}(S_{n-1})$, while there is no contribution from ii). Moreover, there is no contribution from iii), since the polynomial representatives $H^n(\mathcal L(S_{n-1}))$ do not depend on $x_{n+1}^*$ or $x_{1,n+1}^*$. Finally, the contribution from iv) depends on $r$. If $r=1$, then $\pi$ maps $\mathcal H^{n,1}(S_n)$ surjectively onto the one-dimensional space  spanned by $x_{1,n+1}^*x_2^*\cdots x_n^*$. Choosing an explicit lift, we can identify this one-dimensional subspace with the span of 
\begin{equation}\label{eq:25}
x_{n+1}^*x_{1,2}^*x_3^*\cdots x_n^*-x_{1,n+1}^*x_2^*x_3^*\cdots x_n^*\,.
\end{equation}
In particular, using induction on $n$, we obtain $\beta_{n,1}(S_n)=n-1$. On the other hand, if $r>1$, then  $\pi\circ x_{1,n+1}^*\cdot$ maps $\mathcal H^{n,r}(S_n)$ onto the kernel of the multiplication map $x_1^*\cdot$ restricted to $\mathcal H^{n-1,r-1}(S_{n-1})$. Hence, for $1<r\le n$,
\begin{equation}\label{eq:26}
\beta_{n,r}(S_n)=\beta_{n-1,r}(S_{n-1})+\beta_{n-1,r-1}(S_{n-1})-\rho_{n,r}\,,
\end{equation}
where $\rho_{n,r}$ is the rank of $x_1^*\cdot:\mathcal H^{n-1,r-1}(S_{n-1})\to \mathcal H^{n,r-1}(S_{n-1})$.
\end{rem}

\begin{lem}\label{lem:33} 
For every $n>0$ and $0\le r\le n/2$,  $\mathcal H^{n,r}(S_n)$ is spanned by polynomials of the form
\begin{equation}\label{eq:27}
x_{i_1}^*\cdots x_{i_p}^*\eta_{s_1,t_1}\cdots \eta_{s_r,t_r}\,,
\end{equation}
where $(i_1,\ldots,i_p,s_1,t_1,\ldots,s_q,t_q)$ is a permutation of $\{2,\ldots,n+1\}$ and $\eta_{a,b}=x_{a}^*x_{1,b}^*+x_b^*x_{1,a}^*$ for all $a,b\in\{1,\ldots,n\}$.  
\end{lem}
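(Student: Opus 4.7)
The plan is to proceed by induction on $n$, using the decomposition $\mathcal H^{n,r}(S_n)=(\text{i-contribution})\oplus(\text{iv-contribution})$ furnished by Remark \ref{rem:32}. The base case $n=1$ is trivial since $\mathcal H^{1,0}(S_1)=0$. For the inductive step, the i-contribution equals $x_{n+1}^*\mathcal H^{n-1,r}(S_{n-1})$; by the inductive hypothesis any class there has a representative of the form $x_{n+1}^*\cdot\alpha$ where $\alpha$ is an $\eta$-product with indices filling $\{2,\ldots,n\}$. Reordering absorbs $x_{n+1}^*$ into the block of free vertex variables and produces an $\eta$-product of the form required by the lemma.

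The substance of the argument lies in the iv-contribution. For $r=1$ this summand is one-dimensional, and a direct expansion shows that the representative \eqref{eq:25} equals, up to sign, the $\eta$-product $\eta_{2,n+1}x_3^*\cdots x_n^*$. For $r>1$ the first step is to verify that the multiplication $x_1^*\cdot:\mathcal H^{n-1,r-1}(S_{n-1})\to\mathcal H^{n,r-1}(S_{n-1})$ vanishes identically: for any $\eta$-product representative $\alpha=x_{i_1}^*\cdots x_{i_p}^*\eta_{s_2,t_2}\cdots\eta_{s_r,t_r}$ supplied by the inductive hypothesis, one has
\begin{equation*}
x_1^*\alpha=Q\bigl(x_{1,i_1}^*x_{i_2}^*\cdots x_{i_p}^*\eta_{s_2,t_2}\cdots\eta_{s_r,t_r}\bigr),
\end{equation*}
a direct consequence of $Q(x_{1,i_1}^*)=x_1^*x_{i_1}^*$ and the closedness of the remaining factor. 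By Remark \ref{rem:32} this identifies the iv-summand (via $\pi$) with $x_{1,n+1}^*\mathcal H^{n-1,r-1}(S_{n-1})$.

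To exhibit an $\eta$-product spanning set for the iv-summand I would, for each $\alpha$ as above, introduce
\begin{equation*}
\omega=x_{i_2}^*\cdots x_{i_p}^*\eta_{i_1,n+1}\eta_{s_2,t_2}\cdots\eta_{s_r,t_r}\in\mathcal C^n_{n+r}(S_n,V(S_n)),
\end{equation*}
which is closed and of the form required by the lemma. Expanding $\eta_{i_1,n+1}=x_{i_1}^*x_{1,n+1}^*+x_{n+1}^*x_{1,i_1}^*$ and applying $\pi$ (which annihilates the $x_{n+1}^*$-term) yields, after careful sign tracking, $\pi([\omega])=-x_{1,n+1}^*[\alpha]$. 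Hence $[\omega]$ and the iv-class associated with $[\alpha]$ agree modulo an element of the i-summand, which is itself an $\eta$-product by the previous case. Letting $[\alpha]$ range over a spanning set of $\mathcal H^{n-1,r-1}(S_{n-1})$ therefore exhausts the iv-summand and completes the induction. The main obstacle is the sign bookkeeping in both identities above; once organized, the remaining argument is essentially combinatorial.
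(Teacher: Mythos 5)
Your argument is correct and follows essentially the same route as the paper's proof: induction on $n$ through the decomposition of Remark \ref{rem:32}, with the type i) summand handled by the inductive hypothesis, the $r=1$ case settled by rewriting the explicit lift \eqref{eq:25} as $x_3^*\cdots x_n^*\eta_{2,n+1}$, and the type iv) summand handled by the identity $x_1^*x_{i_1}^*\omega=Q(x_{1,i_1}^*\omega)$ together with the lift $-\eta_{i_1,n+1}\omega$ of $x_{1,n+1}^*x_{i_1}^*\omega$. The only difference is that the paper's proof additionally shows that $x_1^*\cdot$ has trivial kernel on $\mathcal H^{n-1,(n-1)/2}(S_{n-1})$ for odd $n$ --- a fact not needed for the spanning claim in the stated range $r\le n/2$ (where every inductive representative carries a free vertex variable), but relied upon by the subsequent lemma to evaluate the ranks $\rho_{n,r}$.
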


\begin{proof}
We prove this by induction on $n$, with the case $n=1$ being trivially true since $\beta_1(S_1)=0$. Assuming the results hold for $\mathcal H^{n-1,r-1}(S_{n-1})$ and $\mathcal H^{n-1,r}(S_{n-1})$, we can follow the analysis carried out in Remark \ref{rem:32}. Since \eqref{eq:25} can be written as $x_3^*\cdots x_n^*\eta_{2,n+1}$, it is already in the form \eqref{eq:27}. Moreover, by induction, $x_{n+1}^*\mathcal H^{n-1,r}(S_{n-1})$ is spanned by polynomials of the form \eqref{eq:27}. Furthermore, every element of $\mathcal H^{n-1,r-1}(S_{n-1})$ is a sum of terms of the form $x_i^*\omega$ for some $i\in \{2,\ldots,n\}$. This implies that $\omega$ is a cocycle in the complex $\mathcal C^\bullet_{n+r-3}(S_{n-1},V(S_{n-1})\setminus\{i\})$, and thus can (up to relabeling of the vertices) be thought of as an element of $\mathcal H^{n-2,r-1}(S_{n-2})$. Then $x_1^*x_i^*\omega=Q(x_{1,i}^*\omega)$ vanishes in cohomology, and thus $x_i^*\omega$ contributes to $\mathcal H^{n,r}(S_n)$. Namely, $x_{1,n+1}^*x_i^*\omega=\pi(-\eta_{i,n+1}\omega)$ and, by induction, $\eta_{i,n+1}\omega$ is (up to a possible sign) of the form \eqref{eq:27}. To conclude the proof, we are going to show that when $n$ is odd, the multiplication operator $x_1^*\cdot$ has trivial kernel when restricted to $\mathcal H^{n-1,(n-1)/2}(S_{n-1})$. By induction, every element of $\mathcal H^{n-1,(n-1)/2}(S_{n-1})$ can be written as
\begin{equation}\label{eq:28}
P=\eta_{2,3}P_{2,3}+{\sum_{i,j}}'  \eta_{2,i}\eta_{3,j}P_{i,j}\,,
\end{equation}
where the sum is over all {\it distinct} $i,j\in \{3,\ldots,n-1\}$ and each $P_{i,j}$ is a polynomial of degree $n-3$ in the variables $\eta_{r,s}$ with $r,s\notin \{i,j\}$. Suppose $x_1^*P$ vanishes in $\mathcal H^{n,(n-1)/2}(S_{n-1})$ i.e.\ 
\begin{equation}\label{eq:29}
x_1^*P=Q(x_2^*x_3^*\Gamma_{23}+x_2^*x_{1,3}^*\Gamma_2+x_3^*x_{1,2}^*\Gamma_3+x_{1,2}^*x_{1,3}^*\Gamma)
\end{equation}
for some $\Gamma_{23},\Gamma_2,\Gamma_3,\Gamma\in \mathcal C^\bullet(S_{n-3},V(S_{n-3})\setminus\{2,3\})$. Expanding the RHS of \eqref{eq:29} using the Leibniz Rule and matching coefficients with the LHS, we obtain
\begin{equation}\label{eq:30}
x_1^*{\sum_{i,j}}' x_j^*x_{1,i}^* P_{i,j}=-Q(\Gamma_2)-x_1^*\Gamma 
\end{equation}
and 
\begin{equation}\label{eq:31}
x_1^*{\sum_{i,j}}' x_i^*x_{1,j}^* P_{i,j}=-Q(\Gamma_3)+x_1^*\Gamma\,. 
\end{equation}
Adding \eqref{eq:30} and \eqref{eq:31} yields
\begin{equation}
x_1^*{\sum_{i,j}}'\eta_{i,j}P_{i,j}=Q(-\Gamma_2-\Gamma_3)\,.
\end{equation}
Using the induction hypotheses and keeping into account that all $\eta_{i,j}P_{i,j}$ are linearly independent, we conclude that each $P_{i,j}$ vanishes. Substituting into \eqref{eq:28}, \eqref{eq:29} and arguing as above, we obtain $x_1^*\eta_{2,3}P_{2,3}=Q(\Gamma_2+\Gamma_3)$ which contradicts the induction hypothesis. Hence, an element of $\mathcal H^{n-1,r}(S_{n-1})$ is in the kernel of $x_1^*\cdot$ if and only if $2r<n-1$. Moreover, all such elements are projections of linear combinations of polynomials of the form \eqref{eq:27}. 
\end{proof}

\begin{lem}
For every $n> 0$, $\beta_n(S_n)=\binom{n}{\lfloor n/2\rfloor}-1$.
\end{lem}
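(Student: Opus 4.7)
The plan is to compute each bigraded component $\beta_{n,r}(S_n)$ for $1\le r\le \lfloor n/2\rfloor$ and then sum. Specifically, I would show by induction on $n$ that $\beta_{n,r}(S_n)=\binom{n}{r}-\binom{n}{r-1}$, whence the total telescopes to $\binom{n}{\lfloor n/2\rfloor}-1$.

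The first step is to exploit Lemma \ref{lem:33} to pin down the correction term $\rho_{n,r}$ appearing in \eqref{eq:26}. Since $r\le \lfloor n/2\rfloor$ forces $2(r-1)<n-1$ (the equality case $2(r-1)=n-1$ only arising at $r=(n+1)/2$, outside the admissible range), the kernel assertion at the end of Lemma \ref{lem:33} guarantees that $x_1^*\cdot$ acts as the zero map on $\mathcal H^{n-1,r-1}(S_{n-1})$. Consequently $\rho_{n,r}=0$ for every $r>1$ under consideration, and \eqref{eq:26} simplifies to
\begin{equation*}
\beta_{n,r}(S_n)=\beta_{n-1,r}(S_{n-1})+\beta_{n-1,r-1}(S_{n-1}).
\end{equation*}
This is supplemented by the base relation $\beta_{n,1}(S_n)=n-1$ from Remark \ref{rem:32}, whose induction step $\beta_{n,1}(S_n)=\beta_{n-1,1}(S_{n-1})+1$ records the contribution of the explicit lift \eqref{eq:25}.

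The second step is to verify by induction on $n$ that $\beta_{n,r}(S_n)=\binom{n}{r}-\binom{n}{r-1}$ in the full range $1\le r\le \lfloor n/2\rfloor$. The $r=1$ sub-case is immediate since $\binom{n}{1}-\binom{n}{0}=n-1$ already agrees with the base relation. For $r>1$, Pascal's identity rearranges as
\begin{equation*}
\left[\binom{n-1}{r}-\binom{n-1}{r-1}\right]+\left[\binom{n-1}{r-1}-\binom{n-1}{r-2}\right]=\binom{n-1}{r}-\binom{n-1}{r-2}=\binom{n}{r}-\binom{n}{r-1},
\end{equation*}
which matches the simplified recurrence exactly. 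A quick consistency check at the boundary $r=\lfloor n/2\rfloor$ for even $n$ confirms that $\binom{n-1}{r}-\binom{n-1}{r-1}=0$ there, compatibly with $\beta_{n-1,r}(S_{n-1})$ lying just outside its admissible range.

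With the closed form in hand, the last step is a telescoping sum:
\begin{equation*}
\beta_n(S_n)=\sum_{r=1}^{\lfloor n/2\rfloor}\left[\binom{n}{r}-\binom{n}{r-1}\right]=\binom{n}{\lfloor n/2\rfloor}-\binom{n}{0}=\binom{n}{\lfloor n/2\rfloor}-1.
\end{equation*}
The main obstacle is the vanishing $\rho_{n,r}=0$; once this has been extracted from Lemma \ref{lem:33}, the remainder of the argument is routine binomial bookkeeping.
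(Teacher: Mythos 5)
Your proposal is correct and follows essentially the same route as the paper: extract $\rho_{n,r}=0$ in the range $1<r\le\lfloor n/2\rfloor$ from the kernel statement of Lemma \ref{lem:33}, reduce \eqref{eq:26} to the Pascal-type recursion with base case $\beta_{n,1}(S_n)=n-1$, solve it as $\binom{n}{r}-\binom{n}{r-1}$, and telescope. The only difference is that you spell out the verification of the closed form (and the boundary consistency at $r=\lfloor n/2\rfloor$) that the paper leaves as ``easy to verify.''
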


\begin{proof}
Let $\rho_{n,r}$ be as in \eqref{eq:26}. The proof of Lemma \ref{lem:33} shows that $\rho_{n,r}=0$, unless $n$ is odd and $r=\frac{n+1}{2}$, in which case $\rho_{n,\frac{n+1}{2}}=\beta_{n-1,\frac{n-1}{2}}(S_{n-1})$. Substituting into \eqref{eq:26}, we obtain the recursion relation
\begin{equation}\label{eq:35}
    \beta_{n,r}(S_n)=\beta_{n-1,r}(S_{n-1})+\beta_{n-1,r-1}(S_{n-1})
\end{equation}
for all $\frac{n}{2}>r>1$, subject to $\beta_{n,r}(S_n)=0$ if $r>\frac{n}{2}$ and $\beta_{n,1}(S_n)=n-1$ for all $n$. As it is easy to verify, the only solution to this recursion equation is given by $\beta_{n,r}(S_n)=\binom{n}{r}-\binom{n}{r-1}$ whenever $\frac{n}{2}>r\ge 1$, and $\beta_{n,r}(S_n)=0$ otherwise. Hence, 
\begin{equation}
\beta_n(S_n)=\sum_{r=1}^{\lfloor n/2\rfloor} \beta_{n,r}(S_n)=\binom{n}{\lfloor n/2\rfloor}-1\,.
\end{equation}
\end{proof}

\begin{rem}
Using the Dixmier exact sequence \eqref{eq:dixmier} as in the proof of Lemma \ref{lem:33} shows that $\mathcal H^k(S_n)$ is spanned by polynomials of two types. Type I polynomials are of the form 
\begin{equation}\label{eq:38}
x_{i_1}^*\cdots x_{i_p}^*\eta_{j_1,j_1}\cdots \eta_{j_v,j_v}\eta_{s_1,t_1}\cdots \eta_{s_r,t_r}\,,
\end{equation}
where $p+2v+2r=k$, $(i_1,\ldots,i_p,j_1,\ldots,j_v,s_1,t_1,\ldots,s_q,t_q)$ is a permutation of $\{2,\ldots,n+1\}$, and $\eta_{a,b}=x_a^*x_{1,b}^*+x_b^*x_{1,a}^*$.  Type II polynomials are of the form
\begin{equation}\label{eq:39}
x_{i_1}^*\cdots x_{i_p}^*x_1^*x_{1,j_1}^*\cdots x_{1,j_q}^*\,,
\end{equation}
where $\{i_1,\ldots,i_p\}\cup\{j_1,\ldots,j_q\}=\{2,\ldots,n+1\}$ and $|\{i_1,\ldots,i_p\}\cap\{j_1,\ldots,j_q\}|=k-n$. Since the image of $Q$ is itself generated by monomials divisible by $x_1^*$, then $\mathcal H^k(S_n)$ decomposes as the direct sum of $\mathcal H_I^k(S_n)$ (the subspace generated by type I monomials) and $\mathcal H_{II}^k(S_n)$ (the subspace generated by type II monomials). We denote by $\beta_k^{I}(S_n)$ (resp.\ $\beta_k^{II}(S_n)$) the dimension of $\mathcal H_I^{k}(S_n)$ (resp.\ of $\mathcal H_{II}^{k}(S_n)$). 
\end{rem}

\begin{lem}\label{lem:37}
For every $n\ge 1$, 
$\beta_{n+1}^{II}(S_n)=\binom{n}{\lfloor n/2\rfloor}$.
\end{lem}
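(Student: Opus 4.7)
The plan is to compute $\beta^{II}_{n+1}(S_n)$ via a rank--nullity argument that leverages the preceding lemma's evaluation $\beta_n(S_n) = \binom{n}{\lfloor n/2 \rfloor} - 1$. Let $V_n$ denote the subspace of type I essential cochains of degree $n$ in $\mathcal{C}^\bullet(S_n)$ and $V_{n+1}$ the subspace of type II essential cochains of degree $n+1$. I claim $\mathcal H^{n+1}_{II}(S_n) \cong V_{n+1}/Q(V_n)$, so everything reduces to computing $\dim V_{n+1}$, $\dim V_n$, and $\dim \ker(Q|_{V_n})$.

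For the enumeration, a type II essential monomial of the form $x_1^*\prod_{i \in I}x_i^*\prod_{j \in J}x_{1,j}^*$ in degree $n+1$ requires $I \sqcup J = \{2,\ldots,n+1\}$, so these are in bijection with subsets $J \subseteq \{2,\ldots,n+1\}$ and $\dim V_{n+1} = 2^n$. Analogously, a type I essential monomial $\prod_{i \in I'}x_i^*\prod_{j \in J'}x_{1,j}^*$ in degree $n$ has $I' \sqcup J' = \{2,\ldots,n+1\}$ and must satisfy $J' \neq \emptyset$, because vertex $1$ is covered only through an edge dual; this gives $\dim V_n = 2^n - 1$.

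To justify $\mathcal H^{n+1}_{II}(S_n) \cong V_{n+1}/Q(V_n)$, I would make two observations: first, every type II cochain is automatically a cocycle, since $Q$ acts by $x_{1,j}^* \mapsto x_1^*x_j^*$, which vanishes against an already-present $x_1^*$; second, there are no type II essential cochains in degree $n$, as a factor of $x_1^*$ combined with at most $n-1$ further variables cannot supply the coverage needed for all $n$ leaves. Consequently every type II coboundary in degree $n+1$ is $Q(\omega)$ for some $\omega \in V_n$, and the essential cohomology decomposition $\mathcal H^n(S_n) = \mathcal H^n_I(S_n) \oplus \mathcal H^n_{II}(S_n)$ collapses to $\mathcal H^n(S_n) = \mathcal H^n_I(S_n)$.

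The preceding lemma then identifies $\dim \ker(Q|_{V_n}) = \dim \mathcal H^n_I(S_n) = \binom{n}{\lfloor n/2 \rfloor} - 1$, and rank--nullity concludes
\[
\beta^{II}_{n+1}(S_n) = \dim V_{n+1} - \dim Q(V_n) = 2^n - \left(2^n - 1 - \left(\binom{n}{\lfloor n/2 \rfloor} - 1\right)\right) = \binom{n}{\lfloor n/2 \rfloor}.
\]
I do not anticipate a significant obstacle: once the two essential-monomial counts and the vanishing of $\mathcal H^n_{II}(S_n)$ are in place, the remainder is a purely dimensional calculation that piggybacks on the previous lemma.
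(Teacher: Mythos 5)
Your proof is correct, but it takes a genuinely different route from the paper's. The paper proves this lemma by running the Dixmier long exact sequence one more time: it sets up the recursion $\beta_{n+1,r}^{II}(S_n)=\beta_{n,r-1}^{II}(S_{n-1})+\beta_{n,r}^{II}(S_{n-1})-\rho_{n,r+1}$ on the bigraded pieces, feeds in the values of $\rho_{n,r}$ extracted from the proof of Lemma \ref{lem:33}, solves the recursion in closed form, and sums over $r$. You instead give a direct, non-recursive dimension count inside the essential complex $\mathcal C^\bullet(S_n,V(S_n))$: since every edge of $S_n$ contains the center, an essential monomial needs at least $n$ factors, so $\mathcal C^{n-1}(S_n,V(S_n))=0$ and no degree-$n$ essential monomial can contain $x_1^*$; hence $\mathcal C^n(S_n,V(S_n))=V_n$ has dimension $2^n-1$, the $x_1^*$-divisible monomials in degree $n+1$ span a $2^n$-dimensional space $V_{n+1}$ consisting entirely of cocycles, the coboundaries in degree $n+1$ are exactly $Q(V_n)\subseteq V_{n+1}$, and rank--nullity together with $\dim\ker(Q|_{V_n})=\beta_n(S_n)=\binom{n}{\lfloor n/2\rfloor}-1$ gives the answer. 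All the counts check out (I verified $n=1,2$ explicitly), and the needed input is only the preceding lemma, which is available at this point. Two small remarks: your use of ``type I'' for arbitrary essential degree-$n$ monomials without $x_1^*$ clashes with the paper's definition of type I polynomials as products of the $\eta_{a,b}$; this is harmless because your argument only uses that $V_n$ is the \emph{entire} degree-$n$ essential cochain space, so that $\ker(Q|_{V_n})\cong\mathcal H^n(S_n)$. What you lose relative to the paper is the bigraded refinement $\beta_{n+1,r}^{II}(S_n)=\binom{n}{n-r}-\binom{n}{n-r-1}$, but that refinement is not used downstream, so your shorter argument would serve equally well in the subsequent proposition. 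What you gain is independence from the bookkeeping of $\rho_{n,r}$ and from the inductive machinery of Remark \ref{rem:32}.
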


\begin{proof} Let $\mathcal H_{II}^{k,r}(S_n)=\mathcal H_{II}^k(S_n)\cap \mathcal H^{k,r}(S_n)$ and let $\beta_{k,r}^{II}(S_n)$ be its dimension. 
Since $\mathcal H_{II}^{n+1,n}(S_n)$ is spanned by $x_1^*x_{1,2}^*\cdots x_{1,n}^*$, then $\beta_{n+1,n}^{II}(S_n)=1$ for all $n\ge 0$. Following the analysis of Remark \ref{rem:32}, we analyze the contributions to $\mathcal H^{n+1,r}(S_n)$ coming from the Dixmier exact sequence. Clearly, there is no contribution from $H^n(\mathcal L(S_{n-1}))$. Moreover, we know from Lemma \ref{lem:33} that there is no contribution from $x_{n+1}^*x_{1,n+1}^*\mathcal H^{n-1}(S_{n-1})$, because none of its elements are of type II. Finally, $\pi$ sends type II polynomials to type II polynomials which are thus in the kernel of $x_1^*\cdot$. Hence $\mathcal H^{n+1,r}_{II}(S_n)$ is isomorphic to  the direct sum of $x_{1,n+1}^*\mathcal H_{II}^{n,r-1}(S_{n-1})$ and the quotient 
\begin{equation}
\frac{x_{n+1}^*\mathcal H_{II}^{n,r}(S_{n-1})}{x_{n+1}^*x_1^*\mathcal H^{n-1,r}(S_{n-1})}\,.
\end{equation}
Therefore,
\begin{equation}\label{eq:43}
\beta_{n+1,r}^{II}(S_n)=\beta_{n,r-1}^{II}(S_{n-1})+\beta_{n,r}^{II}(S_{n-1})-\rho_{n,r+1} 
\end{equation}
for all $n\ge r\ge 1$ subject to $\beta_{n+1,n}^{II}(S_n)=1$ for all $n\ge 1$. Since 
\begin{equation}
\rho_{n,r}=\delta_{r,\frac{n+1}{2}}\beta_{n-1,\frac{n-1}{2}}(S_{n-1})=\delta_{r,\frac{n+1}{2}}\beta_{n,\frac{n-1}{2}}^{II}(S_{n-1})\,,
\end{equation}
the recursion \eqref{eq:43} is solved by 
\begin{equation}
\beta_{n+1,r}^{II}(S_n) = 
\begin{cases}
\binom{n}{n-r}-\binom{n}{n-r-1} & \textrm{ if } \frac{n-1}{2} < r \le n \\
0 & \textrm{ otherwise } \,.
\end{cases}
\end{equation}
Thus,
\begin{equation}
\beta_{n+1}^{II}(S_n)=\sum_{r=\lfloor n/2\rfloor}^n \beta_{n+1,r}^{II}(S_n) = \binom{n}{\lfloor n/2\rfloor}\,.
\end{equation}

\end{proof}

\begin{prop}
For every $k> n>0$,
\begin{equation}\label{eq:39bis}
\beta_k(S_n) = \binom{2n-k}{\lfloor(2n-k)/2\rfloor}\binom{n}{2n-k}+\binom{2n-k+1}{\lfloor(2n-k+1)/2\rfloor}\binom{n}{2n-k+1}\,.
\end{equation}
\end{prop}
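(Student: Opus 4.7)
The plan is to compute the two summands $\beta_k^I(S_n)$ and $\beta_k^{II}(S_n)$ of $\beta_k(S_n)=\beta_k^I(S_n)+\beta_k^{II}(S_n)$ separately, via the Dixmier exact sequence of Lemma~\ref{lem:30}, and following the template established in the proofs of Lemmas~\ref{lem:33} and~\ref{lem:37}. First I would extend Lemma~\ref{lem:37} from $k=n+1$ to all $k>n$: the analysis leading to the recursion (\ref{eq:43}) depends only on how $\mathcal{H}_{II}^{k-1,\cdot}(S_{n-1})$ contributes to $\mathcal{H}_{II}^{k,r}(S_n)$ under multiplication by $x_{n+1}^*$ and $x_{1,n+1}^*$, and it goes through verbatim in arbitrary degree. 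The correction $\rho_{k,r+1}$ is controlled by the rank of $x_1^*\cdot$ on the lower essential cohomology and, as in Lemma~\ref{lem:33}, is forced to vanish outside a specific parity. Inductively solving the resulting recursion with the boundary conditions provided by the top-bigrading case and summing over $r$ yields
\[
\beta_k^{II}(S_n)=\binom{2n-k+1}{\lfloor(2n-k+1)/2\rfloor}\binom{n}{2n-k+1}.
\]

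Next, I would carry out the analogous analysis for type I. By (\ref{eq:38}), every element of $\mathcal{H}_I^k(S_n)$ is a linear combination of monomials in free vertex factors $x_i^*$, diagonal terms $\eta_{j,j}$, and off-diagonal $\eta_{s,t}$. Running the Dixmier-sequence analysis a second time produces a recursion
\[
\beta_{k,r}^I(S_n)=\beta_{k-1,r-1}^I(S_{n-1})+\beta_{k-1,r}^I(S_{n-1})-\sigma_{k,r+1},
\]
with a parity-supported correction $\sigma$ of the same flavor as $\rho$. Solving this recursion inductively gives
\[
\beta_k^I(S_n)=\binom{2n-k}{\lfloor(2n-k)/2\rfloor}\binom{n}{2n-k},
\]
and adding the two summands yields (\ref{eq:39bis}).

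The main obstacle is the type I analysis. A naive count of the spanning set (\ref{eq:38})---after fixing the $v=k-n$ diagonal indices in $\binom{n}{2n-k}$ ways---equals the number of involutions on the remaining $2n-k$ letters, which strictly exceeds $\binom{2n-k}{\lfloor(2n-k)/2\rfloor}$ once $2n-k\ge 3$. A correct accounting of the relations thus requires an argument in the spirit of the proof of Lemma~\ref{lem:33}, showing that $x_1^*\cdot$ is injective on the appropriate piece of the type I essential cohomology of $S_{n-1}$: one expands a putative boundary as in equations (\ref{eq:29})--(\ref{eq:31}), matches coefficients via the Leibniz rule, and contradicts the induction hypothesis. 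This is precisely the step that pins down $\sigma_{k,r+1}$ and forces the central-binomial shape of the answer; once it is in place, the recursion is solved by the Pascal-triangle manipulation used at the end of the proofs of Lemmas~\ref{lem:33} and~\ref{lem:37}.
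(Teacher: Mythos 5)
Your plan has a genuine gap, and it also misses the much shorter route the paper actually takes. The paper does not set up any new recursion in degree $k>n$: since a type I class of degree $k$ must contain exactly $v=k-n$ diagonal factors $\eta_{a,a}$ (from $p+2v+2r=k$ together with $p+v+2r=n$), stripping those factors identifies $\mathcal H^k_I(S_n)$ with $\binom{n}{k-n}$ copies of $\mathcal H^{2n-k}(S_{2n-k})$ augmented by the pure vertex monomial, and the linear independence of the resulting products comes for free because multiplication by $x_a^*x_{1,a}^*$ lands in the direct summand ii) of Remark \ref{rem:31}. One then simply substitutes $\beta_m(S_m)+1=\binom{m}{\lfloor m/2\rfloor}$; the type II count is handled identically by reducing to $\beta^{II}_{m+1}(S_m)$ from Lemma \ref{lem:37} with $m=2n-k+1$. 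The overcounting you worry about (involutions versus central binomial coefficients) is real, but it was already resolved once and for all in the computation of $\beta_m(S_m)$, so no new kernel analysis is required.

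The concrete gaps in your proposal are these. First, the claim that the analysis leading to \eqref{eq:43} ``goes through verbatim in arbitrary degree'' is false: in degree $n+1$ the summand ii) of Remark \ref{rem:31}, namely $x_{n+1}^*x_{1,n+1}^*H^{k-2}(\mathcal L(S_{n-1}))$, contributes nothing to type II because the bottom-degree essential cohomology of $S_{n-1}$ is entirely type I (Lemma \ref{lem:33}); but for $k>n+1$ this summand is nonzero --- it is in fact the dominant source of new classes, namely everything divisible by $\eta_{n+1,n+1}$ --- so both of your two-term recursions are missing a term of the shape $\beta^{*}_{k-2,r-2}(S_{n-1})$, and solving them as written would not produce \eqref{eq:39bis}. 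Second, the step you yourself identify as the main obstacle --- determining $\sigma_{k,r+1}$ by an injectivity argument for $x_1^*\cdot$ on type I classes in arbitrary degree, in the style of equations \eqref{eq:29}--\eqref{eq:31} --- is deferred and never carried out; since that computation is precisely what would determine both the correction terms and the boundary conditions of your recursions, the proposal as it stands does not establish the formula.
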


\begin{proof}
 By Lemma \ref{lem:33}, every polynomial of type I is obtained from either $x_{i_1}^*\cdots x_{i_p}^*$ or
\begin{equation}
H^{2n-k}(\mathcal C^\bullet(S_n,V(S_n)\setminus\{j_1,\ldots,j_v\}))\cong \mathcal H^{2n-k}(S_{2n-k})
\end{equation}
upon repeated multiplication by elements of the form $\eta_{a,a}$, i.e.\ from repeated use of ii) as in Remark \ref{rem:31}. Hence, the corresponding polynomials are all linearly independent in $\mathcal H^k(S_n)$. Therefore,
\begin{equation}\label{eq:45}
\beta_k^I(S_n)=(\beta_n(S_n)+1)\binom{n}{k-n}=\binom{2n-k}{\lfloor (2n-k)/2\rfloor}\binom{n}{2n-k}\,. 
\end{equation}
Similarly, $\mathcal H_{II}^k(S_n)$ is generated by polynomials of the form $\eta_{a_1,a_1}\cdots \eta_{a_{k-n-1},a_{k-n-1}}\omega$ for some type II polynomial $\omega$ in
\begin{equation}
H^{2n-k+2}(\mathcal C^\bullet(S_n,V(S_n)\setminus \{a_1,\ldots,a_{k-n-1}\}))\cong \mathcal H^{2n-k+2}(S_{2n-k+1})\,.
\end{equation}
Hence, using Lemma \ref{lem:37}, we obtain
 \begin{equation}\label{eq:47}
\beta_{k}^{II}(S_n)=\beta_{2n-k+2}^{II}(S_{2n-k+1})\binom{n}{2n-k+1}=\binom{2n-k+1}{\lfloor (2n-k+1)/2\rfloor}\binom{n}{2n-k+1}\,.
 \end{equation}
Since $\beta_k(S_n)=\beta_k^I(S_n)+\beta_k^{II}(S_n)$, \eqref{eq:39bis} follows from \eqref{eq:45} and \eqref{eq:47}. 
\end{proof}

\begin{theorem}[\cite{ArmstrongCairnsJessup97}]
For every $n,k\ge 0$, $b_k(S_n)=\binom{n+1}{\lceil k/2\rceil}\binom{n}{\lfloor k/2\rfloor}$.
\end{theorem}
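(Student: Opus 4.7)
The plan is to invoke the decomposition of Theorem \ref{thm:20} applied to $G = S_n$, substitute the closed-form expressions for the essential cohomology of $S_m$ derived in this section, and collapse the resulting double sum via Vandermonde's identity.

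First, I would classify the induced subgraphs of $S_n$. Any subset $T \subseteq V(S_n)$ that excludes the central vertex $1$ induces the edgeless graph $|T|K_1$, while any $T = \{1\} \cup T'$ with $|T'| = m$ induces the star $S_m$. Using $\beta_i(mK_1) = \delta_{i,m}$ from Remark \ref{rem:19}, Theorem \ref{thm:20} becomes
\[
b_k(S_n) = \binom{n}{k} + \sum_{m=0}^{n}\binom{n}{m}\beta_k(S_m).
\]

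Next, I would combine the formulas for $\beta_k(S_m)$ developed in this section (vanishing for $k < m$, the value $\binom{m}{\lfloor m/2\rfloor} - 1$ at $k = m$, and \eqref{eq:45}, \eqref{eq:47}, \eqref{eq:39bis} for $k > m$) into the single uniform expression $\beta_k(S_m) = \alpha_k(S_m) - \delta_{k,m}$, where
\[
\alpha_k(S_m) = \binom{2m-k}{\lfloor(2m-k)/2\rfloor}\binom{m}{2m-k} + \binom{2m-k+1}{\lfloor(2m-k+1)/2\rfloor}\binom{m}{2m-k+1}
\]
and binomials with negative lower entries are understood as zero. The correction $-\delta_{k,m}$ absorbs the off-by-one at $k = m$ implicit in the $(\beta_n(S_n)+1)$ factor of \eqref{eq:45}. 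Since $\sum_m \binom{n}{m}\delta_{k,m} = \binom{n}{k}$ exactly cancels the initial $\binom{n}{k}$ term, the decomposition simplifies to $b_k(S_n) = \sum_{m=0}^{n}\binom{n}{m}\alpha_k(S_m)$.

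Finally, I would split according to the parity of $k$ and substitute $m = \lceil k/2\rceil + i$. The algebraic heart of the argument is the pair of identities
\[
\binom{a+i}{2i}\binom{2i}{i} = \binom{a+i}{i}\binom{a}{i}, \qquad \binom{a+i}{2i+1}\binom{2i+1}{i} = \binom{a+i}{i}\binom{a}{i+1},
\]
which, applied with $a = \lfloor k/2\rfloor$, factor out a common $\binom{a+i}{i}$ from $\alpha_k(S_m)$. When $k = 2\ell$ is even, Pascal's rule merges the bracket into $\binom{\ell+1}{i+1}$; when $k = 2\ell+1$ is odd, one treats the two resulting terms separately. In both cases, a further use of $\binom{n}{a}\binom{a}{b} = \binom{n}{b}\binom{n-b}{a-b}$ followed by Vandermonde's convolution $\sum_i \binom{p}{i}\binom{q}{r-i} = \binom{p+q}{r}$ collapses the sum to the desired answer: $\binom{n}{\ell}\binom{n+1}{\ell}$ in the even case, and (after one more Pascal) $\binom{n}{\ell}\binom{n+1}{\ell+1}$ in the odd case, both matching $\binom{n+1}{\lceil k/2 \rceil}\binom{n}{\lfloor k/2 \rfloor}$.

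The main obstacle is spotting the correct binomial factorizations that unlock Vandermonde; once isolated, the remainder is routine bookkeeping across the two parity cases.
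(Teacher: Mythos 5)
Your proposal is correct and follows essentially the same route as the paper: classify the induced subgraphs of $S_n$ as $iK_1$ or $S_j$, feed the essential-cohomology formulas for these into the decomposition \eqref{eq:12}, and evaluate the resulting binomial sum. The only difference is the final bookkeeping step, where you use trinomial revision, Pascal, and Vandermonde's convolution (split by the parity of $k$) while the paper packages the same convolution via the Multinomial Theorem applied to $(1+x)^{n+1}(1+y)^n=(1+x)(1+x+y+xy)^n$.
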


\begin{proof}
The induced subgraphs of $S_n$ are of the form $iK_1$ for $i\in\{1,\ldots,n\}$ or $S_j$ for some $j\in\{0,\ldots,n\}$.  Hence, it follows from \eqref{eq:12} that
\begin{align*}
b_k(S_n)&=\sum_{i=1}^n \binom{n}{i} \beta_k(iK_1)+\sum_{j=1}^n \binom{n}{j}\beta_k(S_j)\\
&= \binom{n}{k}+\binom{n}{k}\left(\binom{k}{\lfloor k/2\rfloor} - 1 \right) + \\
&+\sum_{j=k+1}^n \binom{n}{j} \left(\binom{j}{2j-k}\binom{2j-k}{\lfloor (2j-k)/2\rfloor} +\binom{j}{2j-k+1}\binom{2j-k+1}{\lfloor (2j-k+1)/2 \rfloor} \right)\\
& = \sum_{j=0}^n \binom{n}{j,2j-k,j-\lceil k/2\rceil,j-\lfloor k/2\rfloor} \\
&+\sum_{j=0}^n\binom{n}{j,2j-k+1,j-\lfloor k/2\rfloor, j-\lceil k/2\rceil +1}\,.
\end{align*}
The result then follows using the Multinomial Theorem on both sides of
\begin{equation}
(1+x)^{n+1}(1+y)^n=(1+x)(1+x+y+xy)^n
\end{equation}
and equating the coefficients of $x^{\lceil k/2\rceil}y^{\lfloor k/2\rfloor}$.

\end{proof}

\begin{cor}
Let $t(S_n)$ be the dimension of $H^\bullet(\mathcal L(S_n))$ as a real vector space (i.e.\ forgetting the grading). Then $t(S_n)=2\binom{2n+1}{n}$ for every $n\ge 1$.
\end{cor}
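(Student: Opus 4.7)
The plan is to compute $t(S_n)$ as the sum over $k$ of the graded Betti numbers $b_k(S_n)$ provided by the preceding theorem, and then collapse the resulting double sum using Vandermonde's identity. Since $b_k(S_n)=\binom{n+1}{\lceil k/2\rceil}\binom{n}{\lfloor k/2\rfloor}$, I would split the summation according to the parity of $k$.

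More precisely, I would write
\begin{equation*}
t(S_n)=\sum_{k\ge 0} b_k(S_n) = \sum_{j\ge 0}\binom{n+1}{j}\binom{n}{j}+\sum_{j\ge 0}\binom{n+1}{j+1}\binom{n}{j}\,,
\end{equation*}
where the first sum collects the contributions from $k=2j$ and the second from $k=2j+1$. Using the symmetry $\binom{n}{j}=\binom{n}{n-j}$ and Vandermonde's identity, the first sum becomes $\sum_j\binom{n+1}{j}\binom{n}{n-j}=\binom{2n+1}{n}$. For the second sum, substituting $i=j+1$ and again applying $\binom{n}{j}=\binom{n}{n-j}$ together with Vandermonde yields $\sum_i\binom{n+1}{i}\binom{n}{n-i+1}=\binom{2n+1}{n+1}$, and then the identity $\binom{2n+1}{n+1}=\binom{2n+1}{n}$ finishes the count.

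There is essentially no obstacle here: this is a purely combinatorial post-processing of the formula for $b_k(S_n)$, and the only minor care required is bookkeeping of the index shift in the odd case and the use of the symmetry of binomial coefficients so that Vandermonde applies cleanly. Adding the two contributions produces $t(S_n)=2\binom{2n+1}{n}$.
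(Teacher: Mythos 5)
Your proof is correct. It differs from the paper's in one structural step: the paper first invokes Poincar\'e duality for $H^\bullet(\mathcal L(S_n))$ (valid by Nomizu's theorem, since $\dim\mathcal L(S_n)=2n+1$ is odd so duality pairs even with odd degrees) to write $t(S_n)=2\sum_{l=0}^n b_{2l}(S_n)=2\sum_{l}\binom{n+1}{l}\binom{n}{l}$, and then applies a single coefficient extraction from $(1+x)^{2n+1}=(1+x)^{n+1}(1+x)^n$ --- which is exactly your Vandermonde identity in generating-function form. You instead sum over all degrees directly, splitting by the parity of $k$, and evaluate the even and odd contributions separately, obtaining $\binom{2n+1}{n}$ and $\binom{2n+1}{n+1}=\binom{2n+1}{n}$ respectively. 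Your route is slightly more self-contained, since it needs nothing beyond the closed formula for $b_k(S_n)$ and elementary binomial identities, at the cost of carrying out Vandermonde twice; the paper's route buys a shorter computation by importing Poincar\'e duality. Both index manipulations in your argument check out.
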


\begin{proof}
By Poincar\'e duality, whose validity is guaranteed by \cite{Nomizu54},
\begin{equation}
t(S_n)=2\sum_{l=0}^n b_{2l}(S_n) = 2\sum_{l=0}^n\binom{n+1}{l}\binom{n}{l}\,.
\end{equation}
The result then follows by using the Binomial Theorem on both sides of $(1+x)^{2n+1}=(1+x)^{n+1}(1+x)^n$ and equating the coefficients of $x^n$.
\end{proof}

\begin{rem}
Using Stirling's approximation, we obtain $t(S_n)\sim \frac{4^{n+1}}{\sqrt{\pi n}}$. This growth is intermediate between $t(nK_1)= 2^n$ and $t(K_n)=2^{n^2/2}n^{1/8}\kappa$ for a constant $\kappa$ \cite{GrassbergerKingTirao02}.  
\end{rem}

\section{The Cohomology of the Grantcharov-Grantcharov-Iliev Lie Algebras}

\begin{rem}
Let $G$ be a finite simple graph, and let $\Sigma=\{\sigma_1,\ldots,\sigma_s\}$ be a collection of cliques in $G$.
We denote by $\mathcal C^\bullet(G,\Sigma)$ the Cartan-Chevalley-Eilenberg complex of the Lie algebra $\mathcal L(G,\Sigma)$. Its underlying vector space can be thought of as skew-commuting real polynomials in the variables $x    _i^*$ for all $i\in V(G)$, $x_{i,j}^*$ for all $\{i,j\}\in E(G)$, and $y_k^*$ for all $k\in\{1,\ldots,s\}$. The differential is the odd derivation $Q$ such that
\begin{align*}
Q(x_i^*)&=\sum_{k=1}^s |\{i\}\cap V(\sigma_k)| x_i^*y_k^*\,,\\
Q(x_{i,j}^*)&=x_i^*x_j^*+\sum_{k=1}^s |\{i,j\}\cap V(\sigma_k)| x_{i,j}^* y_k^*\,, 
\end{align*}
and $Q(y_k^*)=0$ for all $k\in\{1,\ldots,s\}$. 
\end{rem}

\begin{lem}\label{lem:43}
    Let $G$ be a finite simple graph and let $\Sigma=\{\sigma_1,\ldots,\sigma_s\}$ be a collection of cliques in $G$ ordered in such a way that $\sigma_i\subseteq \sigma_m$ if and only if $i\le m$.  Let $G'$ be the subgraph of $G$ induced by the vertices that are not contained in $\sigma_m$ and let $\Sigma'=\{\sigma_{m+1}',\ldots,\sigma_s'\}$ be the collection of cliques of $G'$ such that $V(\sigma'_j)=V(\sigma_j)\cap V(G')$ for all $j\in \{m+1,\ldots,s\}$. Then for all $k\ge 0$,
    \begin{equation}
    H^k(\mathcal L(G,\Sigma))\cong H^k(\mathcal L(G'+mK_1,\Sigma'))\,.  
    \end{equation}
\end{lem}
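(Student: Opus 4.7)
The plan is to exploit the fact that for each $k \in \{1,\ldots,m\}$, the Lie derivative
\begin{equation*}
L_{y_k} := Q \circ \iota_{y_k} + \iota_{y_k} \circ Q
\end{equation*}
on $\mathcal C^\bullet(G,\Sigma)$ (with $\iota_{y_k}$ the interior product) is null-homotopic by Cartan's magic formula, and hence acts as zero on $H^\bullet(\mathcal L(G,\Sigma))$. On generators, $L_{y_k}$ is diagonal with nonnegative integer eigenvalues: $L_{y_k}(y_l^*) = 0$, $L_{y_k}(x_i^*) = |\{i\} \cap V(\sigma_k)|\, x_i^*$, and $L_{y_k}(x_{i,j}^*) = |\{i,j\} \cap V(\sigma_k)|\, x_{i,j}^*$. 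Since the operators $L_{y_1}, \ldots, L_{y_m}$ mutually commute and are semisimple, the complex splits as a direct sum of joint weight subspaces; combined with null-homotopy this forces only the joint weight-zero subcomplex $\mathcal C^\bullet_0$ to contribute to cohomology.

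To identify $\mathcal C^\bullet_0$, I would observe that nonnegativity of the weights means a monomial lies in $\mathcal C^\bullet_0$ if and only if each of its variables is individually killed by every $L_{y_k}$ with $k \le m$. The ordering hypothesis yields $\bigcup_{k \le m} V(\sigma_k) = V(\sigma_m)$, so the surviving variables are exactly
\begin{equation*}
\{x_i^* : i \in V(G')\} \cup \{x_e^* : e \in E(G')\} \cup \{y_l^* : 1 \le l \le s\}.
\end{equation*}
Using that $V(\sigma_k) \cap V(G') = \emptyset$ for $k \le m$, one checks that $y_1^*, \ldots, y_m^*$ are $Q$-closed and never appear in $Q$ of any surviving generator, while the restriction of $Q$ to the remaining variables agrees with the Chevalley-Eilenberg differential of $\mathcal L(G', \Sigma')$. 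This produces an isomorphism of cochain complexes
\begin{equation*}
\mathcal C^\bullet_0 \;\cong\; \mathcal C^\bullet(G', \Sigma') \otimes \bigwedge\nolimits^{\bullet}(y_1^*, \ldots, y_m^*)
\end{equation*}
with trivial differential on the second factor.

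On the target side, Remark \ref{rem:10} gives $\mathcal L(G' + mK_1, \Sigma') = \mathcal L(G', \Sigma') \oplus \mathcal L(mK_1)$, so the Chevalley-Eilenberg complex factors as
\begin{equation*}
\mathcal C^\bullet(G' + mK_1, \Sigma') \;\cong\; \mathcal C^\bullet(G', \Sigma') \otimes \bigwedge\nolimits^{\bullet}(x_{u_1}^*, \ldots, x_{u_m}^*),
\end{equation*}
where $u_1, \ldots, u_m$ are the vertices of $mK_1$. Matching $y_k^* \leftrightarrow x_{u_k}^*$ produces an isomorphism $\mathcal C^\bullet_0 \cong \mathcal C^\bullet(G' + mK_1, \Sigma')$, and taking cohomology yields the claim. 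I expect the main technical obstacle to be the bookkeeping in paragraph two: one must split the edges of $G$ according to their intersection with $V(\sigma_m)$ (internal, crossing, or disjoint) and verify case by case that the weight-zero projection eliminates precisely the variables supported on $V(\sigma_m)$, while the surviving differential matches that of $\mathcal L(G', \Sigma')$ under the identifications $|\{i\} \cap V(\sigma_l)| = |\{i\} \cap V(\sigma'_l)|$ and $|e \cap V(\sigma_l)| = |e \cap V(\sigma'_l)|$ for $l > m$.
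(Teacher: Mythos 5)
Your proof is correct, but it takes a genuinely different route from the paper's. The paper forms the short exact sequence $0\to y_m^*\mathcal C^\bullet(G,\Sigma)\to \mathcal C^\bullet(G,\Sigma)\to \mathcal C^\bullet(G,\Sigma)/y_m^*\mathcal C^\bullet(G,\Sigma)\to 0$, identifies the connecting homomorphism of the associated long exact sequence as $\delta(P)=-y_m^*\epsilon(P)P$ where $\epsilon(P)$ is exactly your $L_{y_m}$-weight, and then reads off $H^k$ as $\ker\delta_k\oplus\operatorname{coker}\delta_{k-1}$, using the canonical decomposition of Section \ref{sec:3} into the pieces $\mathcal C^\bullet(G,\Sigma,S)$ to see that only the summands with $S\cap V(\sigma_m)=\emptyset$ survive. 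So both arguments ultimately hinge on the same diagonal weight operator and the same identification of its zero-weight part; the difference is in how one shows the nonzero-weight part does not contribute. You get this from Cartan's homotopy formula $L_{y_k}=Q\iota_{y_k}+\iota_{y_k}Q$ and the joint eigenspace decomposition, which kills all nonzero weights for all $k\le m$ in one stroke and dispenses with the long exact sequence entirely; the paper gets it from a Dixmier-style extension argument that peels off a single $y_m^*$ and lets the remaining $y_k^*$, $k<m$, come along for free because their weights automatically vanish on the surviving summands. Your route is arguably cleaner and, pushed slightly further, would prove Theorem \ref{isothm} directly without the iteration; the paper's route has the advantage of reusing machinery (the subcomplexes $\mathcal C^\bullet(G,S)$ and the Dixmier sequence) already set up earlier in the text. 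The one point worth spelling out in your write-up is the sign/convention check that $L_{y_k}$ really is the coadjoint action with the stated nonnegative eigenvalues (so that zero total weight forces each factor to have zero weight), and that $Q(x_{u_k}^*)=0$ in $\mathcal C^\bullet(G'+mK_1,\Sigma')$ because the isolated vertices $u_k$ lie in no clique of $\Sigma'$ and no edge; both checks are immediate.
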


\begin{proof}
Consider the exact sequence of cochain complexes
\begin{equation}\label{eq:47bis}
0\to y_m^*\mathcal C^\bullet(G,\Sigma)\to \mathcal C^\bullet(G,\Sigma)\to \frac{\mathcal C^\bullet(G,\Sigma)}{y_m^*\mathcal C^\bullet(G,\Sigma)}\to 0\,.
\end{equation}
Let $\delta_k$ be the graded components of the connecting homomorphism $\delta$ in the long exact sequence associated with \eqref{eq:47bis}. Hence $H^k(\mathcal L(G,\Sigma))$ is isomorphic to the direct sum of the kernel of $\delta_k$ and the cokernel of $\delta_{k-1}$. A straightforward diagram chase shows that $\delta$ is the odd derivation such that $\delta(x_i^*)=|\{i\}\cap V(\sigma_1)|x_i^*y_1^*$ and $\delta(x_{i,j}^*)=|\{i,j\}\cap V(\sigma_1)|x_{i,j}^*y_m^*$. Using the canonical decomposition of Section \ref{sec:3}, we obtain
\begin{equation}
\mathcal C^\bullet(G,\Sigma) \cong \bigoplus_{S\subseteq V(G)} \mathcal C^\bullet(G,\Sigma,S)
\end{equation}
labeled by the induced subgraphs $S$ of $G$,
where $\mathcal C^\bullet(G,\Sigma,S)$ is the subcomplex with underlying vector space $\mathcal C^\bullet(G,S)\otimes \mathbb R[y_1^*,\ldots,y_s^*]$. Let $P=x_{i_1}^*x_{i_2}^*\cdots x_{i_p}^*x_{r_1,t_1}^*,\ldots,x_{r_q,t_q}^*$ be a monomial in $\mathcal C^\bullet(G,S)$. For every $j\in V(G)$, let $\epsilon_j(P)$ be the number of times $j$ occurs in the sequence $(i_1,\ldots,i_p,r_1,t_1,\ldots,r_q,t_q)$, and let $\epsilon(P)=\sum_{j\in V(\sigma_1)}\epsilon_j(P)$. Then $\delta(P)=-y_m^*\epsilon(P)P$. Hence, 
\begin{align}
\ker(\delta_k)&\cong\bigoplus_{S\cap V(\sigma_1)=\emptyset} H^k(\mathcal C^\bullet(G,S)\otimes \mathbb R[y_1^*,\ldots,\widehat{y_m^*},\ldots y_s^*])\\
&\cong H^k(\mathcal L(G',\Sigma')) \otimes \mathbb R[y_1^*\,\ldots,y_{m-1}^*]\,.
\end{align}
Similarly,
\begin{align}
{\rm coker}(\delta_{k-1})& \cong\bigoplus_{S\cap V(\sigma_1)=\emptyset} y_m^* H^{k-1}(\mathcal C^\bullet(G,S)\otimes \mathbb R[y_1^*,\ldots,\widehat{y_m^*}\,\ldots,y_s^*])\\
&\cong y_m^*H^{k-1}(\mathcal L(G',\Sigma'))\otimes \mathbb R[y_1^*\,\ldots,y_{m-1}^*]\,.
\end{align}
The result follows by applying Remark \ref{rem:10} to $\mathcal L(G'+mK_1,\Sigma')$ and then using by K\"unneth's formula.
\end{proof}

\begin{theorem}\label{isothm}
    Let $G$ be a finite simple graph, let $\Sigma$ be a collection of cliques of $G$, and let $\widetilde G$ be the graph induced by the vertices that do not belong to any clique of $\Sigma$. Then for all $k\geq 0$,
    \begin{equation}
      H^k(\mathcal L(G,\Sigma))\cong H^k(\mathcal L(\widetilde G+|\Sigma|K_1))\,.
    \end{equation}
\end{theorem}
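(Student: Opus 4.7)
The plan is to prove the theorem by induction on $|\Sigma|$, with Lemma~\ref{lem:43} serving as the engine of the reduction. The base case $|\Sigma|=0$ is immediate: by the remark following Definition~\ref{def:GGI}, $\mathcal L(G,\emptyset)=\mathcal L(G)$; and since no clique is present, $\widetilde G=G$ and $|\Sigma|K_1$ is empty, so $\widetilde G+|\Sigma|K_1=G$.

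For the inductive step, assume $|\Sigma|=s\ge 1$ and pick any clique of $\Sigma$ to serve as $\sigma_m$; then reorder so that $\sigma_i\subseteq \sigma_m$ exactly for $i\le m$. Since $\sigma_m\in \Sigma$ we have $m\ge 1$, so Lemma~\ref{lem:43} yields
\begin{equation*}
H^k(\mathcal L(G,\Sigma))\cong H^k(\mathcal L(G'+mK_1,\Sigma'))
\end{equation*}
with $|\Sigma'|=s-m<s$, putting us in the range of the inductive hypothesis.

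The crux is then a purely combinatorial claim: $\widetilde{G'+mK_1}=\widetilde G+mK_1$, where the tilde on the left refers to the collection $\Sigma'$ and on the right to $\Sigma$. To verify this, note that the $m$ isolated vertices obviously lie in both tildes. For $v\in V(G')$, we have $v\notin V(\sigma_m)$, hence $v\notin V(\sigma_i)$ for every $i\le m$ because $V(\sigma_i)\subseteq V(\sigma_m)$. On the other hand, for $j>m$ one has $V(\sigma'_j)=V(\sigma_j)\setminus V(\sigma_m)$, so for a vertex of $G'$ the conditions $v\notin V(\sigma'_j)$ and $v\notin V(\sigma_j)$ coincide. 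Thus $v\in V(G')$ lies outside every clique of $\Sigma'$ if and only if it lies outside every clique of $\Sigma$, proving the claim.

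Applying the inductive hypothesis and substituting this identification gives
\begin{equation*}
H^k(\mathcal L(G'+mK_1,\Sigma'))\cong H^k(\mathcal L(\widetilde G+mK_1+(s-m)K_1))=H^k(\mathcal L(\widetilde G+|\Sigma|K_1)),
\end{equation*}
which closes the induction. The main obstacle I anticipate is the bookkeeping in the combinatorial claim: one has to keep straight that the cliques $\sigma'_j$ really are nonempty cliques of $G'$ (guaranteed by $\sigma_j\not\subseteq \sigma_m$ for $j>m$), and that the restriction operation interacts correctly with removing the vertex set of $\sigma_m$; the rest is formal manipulation of the isomorphism supplied by Lemma~\ref{lem:43}.
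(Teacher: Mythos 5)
Your proof is correct and follows essentially the same route as the paper, whose entire argument is the one-line instruction to iterate Lemma~\ref{lem:43} until no cliques remain. You have simply made that iteration into an explicit induction on $|\Sigma|$ and verified the combinatorial compatibility $\widetilde{G'+mK_1}=\widetilde G+mK_1$ that the paper leaves implicit.
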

\begin{proof}
   Iterate Lemma \ref{lem:43} until there are no cliques left.  
\end{proof}

 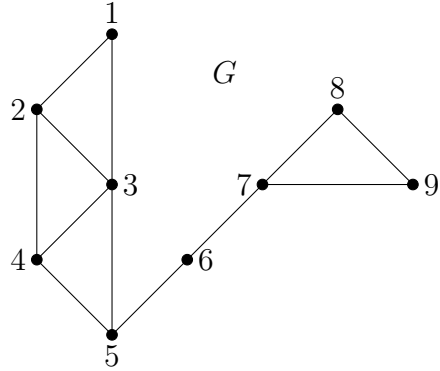
\begin{figure}\begin{tikzpicture}
    \filldraw
    (1,1) circle (2pt) node[above] {$1$}
    (0,0) circle (2pt) node[left] {$2$}
    (1,-1) circle (2pt) node[right] {$3$}
    (0,-2) circle (2pt) node[left] {$4$}
    (1,-3) circle (2pt) node[below] {$5$}
    (2,-2) circle (2pt) node[right] {$6$}
    (3,-1) circle (2pt) node[left] {$7$}
    (4,0) circle (2pt) node[above] {$8$}
    (5,-1) circle (2pt) node[right] {$9$};

    \node (graph) at (2.5,.5) {$G$};
    
    \draw (1,1) -- (0,0) -- (1,-1) -- (1,1);
    \draw (0,0) -- (0,-2) -- (1,-1);
    \draw (0,-2) -- (1,-3) -- (1,-1);
    \draw (1,-3) -- (2,-2) -- (3,-1) -- (4,0) -- (5,-1) -- (3,-1);

    \end{tikzpicture}
    \caption{The graph $G$ as defined in Example \ref{ex:42}.}\label{fig:5}
    \end{figure}

\begin{example}\label{ex:42}
    Let $G$ be as in Figure \ref{fig:5} and let $\Sigma=\{\sigma_1,\sigma_2,\sigma_3,\sigma_4\}$, where $\sigma_1=\{1,2,3\}$, $\sigma_2=\{2,3,4\}$, $\sigma_3=\{3,4,5\}$, $\sigma_4=\{7,8,9\}$. Then $\widetilde G$ is the graph consisting of the single vertex $6$ and thus $H^n(\mathcal L(G,\Sigma))\cong H^n(\mathcal L(5K_1))\cong \mathbb R^{\binom{5}{n}}$ for every $n\ge 0$.    
   
\end{example}

\begin{rem} Let $b_n(G,\Sigma)$ denote the dimension of $H^n(\mathcal L(G,\Sigma))$. As a consequence of Theorem \ref{isothm} and K\"unneth's formula we have
\begin{equation}
b_n(G,\Sigma)=\sum_{l=0}^n b_l(\widetilde G) \binom{|\Sigma|}{l}\,.
\end{equation}
In particular, using Theorem \ref{thm:26}, $b_1(G,\Sigma)=|V(\widetilde G)|+|\Sigma|$, where $V(\widetilde G)$ is precisely the set of vertices of $G$ that do not belong to any clique of $\Sigma$. Similarly,
\begin{equation}
b_2(G,\Sigma)=\binom{|V(\widetilde G)|}{2}+\frac{1}{2}\sum_{i\in V(\widetilde G)}(\widetilde \deg(i))^2-\left|\binom{\widetilde G}{K_3}\right| + |V(\widetilde G)||\Sigma|+\binom{|\Sigma|}{2}\,,
\end{equation}
where $\widetilde \deg(i)$ denotes the number of edges of $G$ containing $i$ that do not share any vertex with the cliques of $\Sigma$ and $\binom{\widetilde {G}}{K_3}$ denotes the set of triangles in $G$ that do not share any vertex with the cliques of $\Sigma$. Along the same lines, by combining Theorem \ref{thm:25} with Theorem \ref{isothm}, we obtain an explicit formula for the third cohomology of any Grantcharov-Grantcharov-Iliev Lie algebra.
\end{rem}

\begin{rem}
Let $\mathcal S(G,\Sigma)$ be a compact solvmanifold  $\exp(\mathcal L(G,\Sigma))/\Lambda$ for a suitable discrete co-compact group $\Lambda$. In particular, $\mathcal S(G,\emptyset)$ is always a compact nilmanifold. Since by Proposition \ref{prop:10} $\mathcal L(G,\Sigma)$ is completely solvable, then the de Rham cohomology of $\mathcal S(G,\Sigma)$ is isomorphic to $H^\bullet(\mathcal L(G,\Sigma))$ by a theorem of Hattori \cite{Hattori60}. Therefore, Theorem \ref{isothm} can be interpreted topologically as stating that for each compact solvmanifold of the form $\mathcal S(G,\Sigma)$ there exists a compact nilmanifold $\mathcal S(\widetilde G+|\Sigma|K_1,\emptyset)$, such that
\begin{equation}
H^\bullet_{\rm dR}(\mathcal S(G,\Sigma))\cong H^\bullet_{\rm dR} (S(\widetilde G+|\Sigma|K_1,\emptyset))
\end{equation}
as graded vector spaces.
\end{rem}

\begin{bibdiv} 
\begin{biblist}

\bib{AldiBevins23}{article}{
   author={Aldi, Marco},
   author={Bevins, Samuel},
   title={2-Step nilpotent $L_\infty$-algebras and hypergraphs},
   journal={J. Pure Appl. Algebra},
   volume={228},
   date={2024},
   number={6},
   pages={107593},
   issn={0022-4049},
}

\bib{ArmstrongCairnsJessup97}{article}{
   author={Armstrong, Grant F.},
   author={Cairns, Grant},
   author={Jessup, Barry},
   title={Explicit Betti numbers for a family of nilpotent Lie algebras},
   journal={Proc. Amer. Math. Soc.},
   volume={125},
   date={1997},
   number={2},
   pages={381--385},
}

\bib{DaniMainkar05}{article}{
   author={Dani, S. G.},
   author={Mainkar, Meera G.},
   title={Anosov automorphisms on compact nilmanifolds associated with
   graphs},
   journal={Trans. Amer. Math. Soc.},
   volume={357},
   date={2005},
   number={6},
   pages={2235--2251},
}

\bib{Dixmier55}{article}{
   author={Dixmier, J.},
   title={Cohomologie des alg\`ebres de Lie nilpotentes},
   language={French},
   journal={Acta Sci. Math. (Szeged)},
   volume={16},
   date={1955},
   pages={246--250}, 
}

\bib{GGI}{article}{
   author={Grantcharov, Gueo},
   author={Grantcharov, Vladimir},
   author={Iliev, Plamen},
   title={Solvable Lie algebras and graphs},
   journal={J. Algebra},
   volume={491},
   date={2017},
   pages={474--489},
}

\bib{GrassbergerKingTirao02}{article}{
   author={Grassberger, Johannes},
   author={King, Alastair},
   author={Tirao, Paulo},
   title={On the homology of free 2-step nilpotent Lie algebras},
   journal={J. Algebra},
   volume={254},
   date={2002},
   number={2},
   pages={213--225},
}

\bib{Hattori60}{article}{
   author={Hattori, Akio},
   title={Spectral sequence in the de Rham cohomology of fibre bundles},
   journal={J. Fac. Sci. Univ. Tokyo Sect. I},
   volume={8},
   date={1960},
   pages={289--331 (1960)},
}

\bib{Kostant61}{article}{
   author={Kostant, Bertram},
   title={Lie algebra cohomology and the generalized Borel-Weil theorem},
   journal={Ann. of Math. (2)},
   volume={74},
   date={1961},
   pages={329--387},
}

\bib{Mainkar15}{article}{
   author={Mainkar, Meera G.},
   title={Graphs and two-step nilpotent Lie algebras},
   journal={Groups Geom. Dyn.},
   volume={9},
   date={2015},
   number={1},
   pages={55--65},
}

\bib{Nomizu54}{article}{
   author={Nomizu, Katsumi},
   title={On the cohomology of compact homogeneous spaces of nilpotent Lie
   groups},
   journal={Ann. of Math. (2)},
   volume={59},
   date={1954},
   pages={531--538}
}

\bib{PouseeleTirao09}{article}{
   author={Pouseele, Hannes},
   author={Tirao, Paulo},
   title={Compact symplectic nilmanifolds associated with graphs},
   journal={J. Pure Appl. Algebra},
   volume={213},
   date={2009},
   number={9},
   pages={1788--1794},
}

\bib{sagemath}{manual}{
      author={Developers, The~Sage},
       title={{S}agemath, the {S}age {M}athematics {S}oftware {S}ystem
  ({V}ersion 10.2)},
        date={2023},
        note={{ https://www.sagemath.org}},
}

\bib{Sigg96}{article}{
   author={Sigg, Stefan},
   title={Laplacian and homology of free two-step nilpotent Lie algebras},
   journal={J. Algebra},
   volume={185},
   date={1996},
   number={1},
   pages={144--161},
   issn={0021-8693},
}

\end{biblist}
\end{bibdiv}

\vskip.1in\noindent
\address{Marco Aldi\\
Department of Mathematics and Applied Mathematics\\
Virginia Commonwealth University\\
Richmond, VA 23284, USA\\
\email{maldi2@vcu.edu}}

\vskip.1in\noindent
\address{Andrew Butler\\
Department of Mathematics \\
University of Wisconsin - Madison\\
Madison, WI 53706, USA\\
\email{ajbutler404@gmail.com}}

\vskip.1in\noindent
\address{Jordan Gardiner\\
Department of Mathematics \\
Brigham Young University\\
Provo, UT 83460, USA\\
\email{jordancorbygardiner@gmail.com}}

\vskip.1in\noindent
\address{Daniele Grandini\\
Department of Mathematics and Economics\\
Virginia State University\\
Petersburg, VA 23806, USA\\
\email{dgrandini@vsu.edu}}

\vskip.1in\noindent
\address{Monica Lichtenwalner\\
Department of Mathematics \\
Virginia Polytechnic Institute and State University\\
Blacksburg, VA 24061, USA\\
\email{mlichtenwalner@vt.edu}}

\vskip.1in\noindent
\address{Kevin Pan\\
Department of Mathematics\\
New York University\\
New York, NY 10012, USA\\
\email{kp2832@nyu.edu}}

\end{document}